\pgfplotsset{compat=1.14}
\numberwithin{equation}{section}
\newlength{\defbaselineskip}
\newtheorem{thm}{Theorem}[section]
\newtheorem{coro}[thm]{Corollary}
\newtheorem{lemma}[thm]{Lemma}
\newtheorem{prop}[thm]{Proposition}
\newtheorem{ass}[thm]{Assumption}
\theoremstyle{remark}
\theoremstyle{remark}
\newtheorem{example}[thm]{Example}
\theoremstyle{definition}
\newcommand{\pof}{prediction}
\newcommand{\VaR}{Q}
\newcommand{\ES}{ES}
\newcommand{\myb}{\underline{\beta}}
\newcommand{\myG}{G}
\newcommand{\mycol}{black}
\newcommand{\PP}[1]{\textcolor{\mycol}{#1}}
\begin{document}

\begin{frontmatter}




\title{\vspace{-4cm}GARCH density and functional forecasts\tnoteref{t1}}


\author[KMA]{Karim M. Abadir\fnref{a1}}
\author[AL]{Alessandra Luati\fnref{a2}}
\author[JRC]{Paolo Paruolo\fnref{a3}}

\address[MKA]{Business School, Imperial College London, UK}
\address[AL]{Department of Statistical Sciences ``Paolo Fortunati'', University of Bologna, Italy}
\address[JRC]{European Commission, Joint Research Centre (JRC), Ispra (VA), Italy}

\begin{abstract}
This paper derives the analytic form of the $h$-step ahead \pof\ density of a GARCH(1,1) process under Gaussian innovations, with a possibly asymmetric news impact curve. The contributions of the paper consists both in the derivation of the analytic form of the density, and in its application to a number of econometric problems. A first application of the explicit formulae is to characterize the degree of non-Gaussianity of the \pof\ distribution; for some values encountered in applications, deviations of the \pof\ distribution from the Gaussian are found to be small, and sometimes not. the Gaussian density as an approximation of the true \pof\ density. A second application of the formulae is  to compute exact tail probabilities and functionals, such as the Value at Risk and the Expected Shortfall, that measure risk
when the underlying asset return is generated by a Gaussian GARCH(1,1). This improves on existing methods based on Monte Carlo simulations and (non-parametric) estimation techniques, because the present exact formulae are free of Monte Carlo estimation uncertainty. A third application is the definition of uncertainty regions for functionals of the \pof\ distribution that reflect in-sample estimation uncertainty. These applications are illustrated on selected empirical examples.
\end{abstract}

\begin{keyword}
GARCH(1,1)\sep
Prediction density\sep
Functionals\sep
Value at Risk\sep
Expected Shortfall

\JEL C22 \sep C53 \sep C58 \sep G17 \sep D81


\end{keyword}

\tnotetext[t1]{Information and views set out in this paper are those of the authors and do not necessarily reflect the ones of the institutions of affiliation.
The authors acknowledge useful comments from Enrique Sentana, Robert Engle, Eric Renault, Leopoldo Catania, Barbara Rossi, Dimitris Politis, Ngai Chan, Christian Brownlees,
Nour Meddahi,
Torben Andersen,
two anonymous referees,
as well as from participants to seminars in University of Verona and Bocconi University of Milan,
the Seventh Italian Congress of Econometrics and Empirical Economics 2017,
EC$^2$ 2017 in Amsterdam,
Barcellona GSE Summer Forum 2017
and the NBER-NSF Time Series Conference 2018.
}

\fntext[a1]{Email: k.m.abadir@imperial.ac.uk, \href{https://orcid.org/0000-0001-5637-9513}{ORCID: 0000-0001-5637-9513}.}
\fntext[a2]{Email: alessandra.luati@unibo.it, \href{https://orcid.org/0000-0001-6407-9385}{ORCID: 0000-0001-6407-9385}.}
\fntext[a3]{Email: paolo.paruolo@ec.europa.eu, \href{https://orcid.org/0000-0002-3982-4889}{ORCID: 0000-0002-3982-4889}, \\ \indent \indent \textbf{Corresponding author.} \\ \indent \indent Address: European Commission, Joint Research Centre (JRC), Via Enrico Fermi 2749,\\ \indent \indent TP 723, 21027 Ispra (VA), Italy}

\end{frontmatter}

\newpage


\newpage
\tableofcontents
\newpage

\section{Introduction}\label{sec_intro}
Since their introduction in \cite{Engle1982} and \cite{Bollerslev1986},
Generalised AutoRegressive Conditional Heteroskedasticity (GARCH) processes
have been widely employed in financial econometrics, see e.g. \cite{Bollerslev2010b}. In the GARCH original formulation, the conditional distribution of innovations was typically assumed to be Gaussian; even with
Gaussian innovations, GARCH processes were shown to generate 
volatility clustering and, when stationary, an unconditional distribution with fatter tails than the Gaussian, see e.g. \cite{Bollerslev1992}.

GARCH processes can include several lags $q$ of the past squared shocks and
several lags $p$ of the past volatility; in practice, however, the GARCH(1,1) model
with $p=q=1$ is often found to offer a good fit for asset returns, and it is
usually preferred to GARCH models with more parameters, see \cite{Tsay2010}
section 3.5, or \cite{Andersen-etal-2006}, section 3.6. Moreover, many
multivariate GARCH models are built on the univariate GARCH(1,1), see e.g.
\cite{Engle-etal2017} and references therein. In this sense the GARCH(1,1)
is both the prototype and the workhorse of GARCH processes in practice.

GARCH processes map news into the conditional volatility; the
function obtained by replacing past conditional volatilities with
unconditional ones was called by \cite{Engle1993a} the news-impact-curve.
For GARCH(1,1) processes, this curve yields the same value of
volatility for positive and negative shocks, i.e. it is symmetric. \cite%
{Glosten1993} (henceforth GJR) extended the GARCH setup to allow for
asymmetric news impact curve responses to negative shocks.

Many measures of risk are functions of the \pof\ distribution of asset
returns. These measures include the Value at Risk, which is a quantile of
the \pof\ distribution of the asset return, see \cite{Jorion2006}, as
well as the Expected Shortfall, see \cite{Patton2019} and \cite{Arvanitis2018}. The latter is
the expected value of the \pof\ distribution of the asset return in the
left tail of the \pof\ density below the Value at Risk; this measure has
been recently re-emphasised by the Third Basel Accords. Both measures are
functionals of the \pof\ distribution of asset returns. 

The \pof\ distribution of a GARCH(1,1) hence plays an important role for
the computation of risk measures in financial applications. This
distribution is not known in analytic form beyond the
distribution of innovations \PP{for the 1-step ahead case}, which is given by assumption \PP{when building} the process, see e.g. \cite{Andersen-etal-2006}, page 811 and \cite{Baillie1992}.

The present paper derives the analytical form of the $h$-step-ahead \pof\
density of a Gaussian GARCH(1,1), $x_t=\sigma _{t}\varepsilon _{t}$, with $\sigma _{t}^{2}=\omega +\alpha
x_{t-1}^{2}+\beta \sigma _{t-1}^{2}$,
also allowing for GJR-GARCH(1,1) \PP{with} asymmetric news-impact-curve.
\PP{The first contribution of the paper is theoretic, and consists in the form of the p.d.f. and c.d.f. of the \pof\ distribution.}
The results are obtained by marginalizing the joint density of the \pof\
observations, using integration and special functions, for any \pof\
horizon $h=1,2,\dots$.
The formulae are valid for stationary as well as non-stationary
GARCH(1,1) processes.

The 2-step-ahead \pof\ distribution is obtained without imposing
constrains on the values of the $\alpha$ and $\beta$ coefficients. For the
$h$-step-ahead \pof\ distribution with $h\geq 3$, a condition on $\beta$ is
required to guarantee integrability, which depends on how the coefficients are related to the last observed value of the volatility $\sigma_1^2$. \PP{Unless the last observed value of volatility $\sigma_1^2$ is relatively low, a sufficient condition for integrability is $\beta$ larger than $0.5$.
Another sufficient condition that is independent of the
last observed value of volatility $\sigma_1^2$
is $\beta$ larger than $0.61803$.}

\PP{As suggested by one referee, one could wonder how frequently the conditions $\beta \geq 0.5$
and $\beta \geq 0.61803$ are satisfied by empirical estimates in practice.
While this is ultimately an empirical question that depends on the data at hand, one could consider the estimates in
\cite{Bampinas2018}, who fit GARCH(1,1) models to all stock returns in the  S\&P 1500 universe, from January 2008 to December 2011, with daily observations.
Their Table 1 reports a mean value of $\hat{\beta}$ of $0.855$ (median $0.887$) with standard deviation of $0.154$.
If the empirical distribution of these estimates were Gaussian, the predicted frequency of $\beta$ estimates below $0.61803$ (respectively $0.5$) would be 6.2\% (respectively 1.1\%).
Typical values for $\beta$ estimates reported in textbooks and in empirical finance literature are also above $0.8$. On this basis, one would expect $\beta \geq 0.5$ or
$\beta \geq 0.61803$ not to be binding restrictions in most practical applications.\footnote{Table 1 in \cite{Bampinas2018} reports a skewness of $-2.670$ and kurtosis of $12.75$, which indicate that the distribution is far from normal; unfortunately, no min or max values are reported in the table. Typical $\hat{\beta}$ values in textbooks and in empirical finance literature can be found in \cite{Linton2019} Table 11.4, \cite{Tsay2010} page 136, \cite{Francq2010} page 262, \cite{Fabozzi2008} page 694; they are all greater than $0.8$ for daily, weekly and monthly returns. }
}

\PP{The assumption of Gaussianity of the innovations is central in the derivations in the paper, which employs analytical integration and special functions that are specific for this distribution. The approach in the derivation of the \pof\ distribution is expected to be amenable to extensions to non-Gaussian symmetric distributions of innovations. These extensions are not trivial and are not considered in the present paper.}

\PP{A first application of the present results is the characterization of} the degree of non-Gaussianity of the \pof\ distribution. This problem is relevant in practice, because the Gaussian density is often used as an approximation of the true \pof\ density, see e.g. \cite{Lau2010}, page 1322.
The exact formuale in this paper demonstrate that the \pof\ density can be very far from normal. However, for some parameter values encountered in applications, the discrepancy of the \pof\ density from the Gaussian density can be small, and this fact would support to the Gaussian approximation. Note that it is the analytic form of the \pof\ density derived in this paper that allows to measure this discrepancy.

\PP{A different question in this first area of application is the association of the coefficients of the GARCH with both the shape of the \pof\ distribution and the shape of the stationary distribution, when this exists. Under stationarity, the predictive distribution converges to the stationary distribution as the number of steps increases. The} tail behavior of the stationary distribution of the Gaussian GARCH(1,1) has been studied extensively, see \cite{Mikosch2000} and \cite{Davis2009a}. The tails
of the stationary distribution of both the volatility and of the GARCH
process $x_t$ are of Pareto type, $\Pr(x_t > u)\approx c u^{-2\kappa}$ say.
These properties are based on results for
random difference equations and renewal theory obtained in \cite{Kesten1973}
and \cite{Goldie1991}.


The \pof\ density is found to resemble a Gaussian density (with appropriate variance) for high values of $\beta
/ \alpha$, and far from it for low values of it.
Similarly, large values of $\beta / \alpha$ are found to be associated with higher values of $\kappa$, i.e. a Pareto stationary distribution with more moments (the Gaussian has all moments).

\PP{A second application of the explicit formulae in this paper is to compute exact tail}
probabilities and functionals, such as Expected Shortfall, that measure risk
when the underlying asset return is generated by a Gaussian GARCH(1,1). \PP{This improves on existing methods based on approximations or Monte Carlo (MC) simulations combined with (non-parametric) estimation techniques.}

The so-far unknown analytic form of the \pof\ density of a GARCH has led econometricians
to look for alternative approximate solutions. \cite{Alexander2013} have resorted to
approximations based on the first 4 moments of the \pof\ distribution;
\cite{Baillie1992} use a Cornish-Fisher expansion and a
Johnson SU distribution \PP{using the first 4 moments of the GARCH(1,1) to fit the distributions}.

Alternative methods for estimating the \pof\ density and
risk measures such as the Value at Risk and the Expected Shortfall
rely on MC simulations of the underlying GARCH processes, see e.g. \cite{Delaigle2016}.
All these MC methods implicitly assume that the density and that the unknown functionals are finite.\footnote{\cite{Delaigle2016}
proposed a non-parametric root-$n$ consistent estimator of the stationary distribution of the (log-)volatility process where $n$ is sample size.}

\PP{In this domain, the present exact formulae are key to prove that the density and the unknown functionals are finite, which is pre-requisite for MC methods to work.}
\PP{It must be noted, however, that MC methods have an additional layer of uncertainty -- associated with MC estimation -- that the exact methods proposed in this paper bypass entirely. Specifically, MC estimation of \pof\ functionals results in confidence intervals with positive length and coverage probability $1-\eta<1$; their counterpart based on the exact results of this paper can be represented by confidence intervals with length equal to 0 and coverage probability 1. Hence the exact methods in this paper are qualitatively superior to those based on MC methods, in addition to being much more parsimonious in terms of required calculations.}

\PP{A third application of} the exact formulae in this paper \PP{is to provide uncertainty} intervals for (functionals of) the \pof\ distribution \PP{that reflect in-sample estimation uncertainty.} This allows one to map estimation uncertainty onto forecast uncertainty for the risk measures and to construct the associated forecast intervals that have a pre-specified (asymptotic) coverage level. For instance, one could predict the Expected Shortfall to lie in an interval $(\ES_{l}, \ES_{u}) $ with 95\% confidence level, where the uncertainty reflects in-sample estimation uncertainty on the GARCH parameters.
\PP{As already discussed for the second area of application, the exact methods in this paper lead to results  that are structurally different from the ones based on MC methods. In fact the latter involve an additional layer of MC estimation uncertainty associated, which is avoided by the exact methods in the present paper.}

The rest of the paper is organised as follows. Section \ref{sec_appr} describes the general approach for the derivation of the \pof\ distribution. Section \ref{sec_main} states the main theoretical results. Section \ref{sec_form} discusses the degree of non-Gaussianity of the \pof\ distribution and compares the \pof\ distribution with the tails of the stationary distribution when this exists; this is the first area of application discussed above.
Section \ref{sec_appl} discusses the second area of application, i.e. how to apply the present exact results to the calculation of the Value at Risk and of the Expected Shortfall, and compares the obtained results with alternative estimators based on MC methods. Section \ref{sec_forecast} discusses the third area of application
of the present analytical formulae
to the construction of forecast intervals for risk measures that reflect in-sample estimation uncertainty. Section \ref{sec_conc} concludes. Proofs are collected in the Appendix. 


\section{The \pof\ density}
\label{sec_appr}

This section summarises the construction used to derive the \pof\ density
as an integral, involving a product of densities of the innovations.
Consider the asymmetric GJR-GARCH(1,1)\
\begin{equation}\label{eq_GARCH11}
x_{t}=\sigma _{t}\varepsilon _{t},\qquad \sigma _{t}^{2}=\omega +\alpha
_{t-1}x_{t-1}^{2}+\beta \sigma _{t-1}^{2},\qquad \alpha _{t}:=\alpha
+\lambda 1_{x_{t}<0} = \alpha
+\frac{\lambda}{2} (1- \varsigma_{t})
\end{equation}%
where $\omega, \alpha, \beta >0$, $\lambda \geq 0$ and
$1_{x_{t}<0} = \frac{1}{2}(1-\varsigma_{t})$ is the
indicator function for the event $x_{t}<0$, and
$\varsigma _{t}:=\sgn(\varepsilon _{t})=\sgn(x_{t})$ is the sign of $%
\varepsilon _t $ or $x_{t}$; these signs are the same
because $\sigma _{t}>0$.
The sequence $\{\varepsilon _{t}\}$ is
assumed to be i.i.d., centered around zero and with Gaussian
p.d.f. $f_{\varepsilon }(\epsilon):=g(\epsilon^2):=(2\pi )^{-\frac{1}{%
2}}\exp (-\epsilon^{2}/2)$.


Time $t=0$ is taken to be the starting time of the \pof, and it is assumed that one wishes to predict $x_{h}$ for some $h=1,2,3,\dots$, conditional on the information set at time $t=0$, which contains $x_0$ and $\sigma_0$; the conditioning on $x_0$ and $\sigma_0$ is not explicitly included in the notation of the \pof\ distribution for simplicity.
\footnote{The information set at $t=0$ containing contains $x_0$ and $\sigma_0$ is consistent with observing $x_t$ from
minus infinity to time 0 under stationarity. Note also that, because $x_0$
and $\sigma_0^2$ are observed, also $\sigma_1^2$ is observed.}

Throughout the paper the values taken by the random variables $x_{t}$, and $z_{t}:=x_{t}^{2}$ are denoted $u_{t}$ and $w_{t}$ respectively, or
sometimes $u$ and $w$ for simplicity, when this may not cause ambiguity.

The next Lemma reports consequences of the
symmetry of the one-step-ahead density $g$ on relevant conditional p.d.f.s.
In the Lemma, the following notation is used:
$\vz:=(z_{1},\dots ,z_{h-1})^{\prime }$, $\vvarsigma:=(\varsigma
_{1},\dots ,\varsigma _{h-1})^{\prime }$, where
$\vw:=(w_{1},\dots ,w_{h-1})^{\prime }$, $\vs:=(s_{1},\dots ,s_{h-1})^{\prime }$
denote values of $\vz$ and $\vvarsigma$.
\PP{The density of a random variable $x$ evaluated at $u$ is indicated as $f_x(u)$, and similarly $f_{x |\varsigma} (u|s)$ indicates the conditional density for $x$ given $\varsigma$, evaluated at $x=u$ and  $\varsigma=s$.}

\begin{lemma}[Densities]\label{lemma01}
For symmetric $f_{\varepsilon }(\epsilon)=g(\epsilon^2)$, the p.d.f. $f_{x_{t}}(\cdot)$ is symmetric, i.e.
$f_{x_{t}}(u)=f_{x_{t}}(-u)$, $u\in\SR$, and it is related to the p.d.f. of $z_{t}$ in the following way
\begin{equation} \label{eq_predictive_Gauss_xt}
f_{x_{t}}(u) =f_{z_{t}}(u^{2})\left\vert u\right\vert .
\end{equation}
Moreover, $\mathrm{\Pr }(\varsigma _{t}=\pm 1)=\frac{1}{2}$ and
one has
\begin{equation} \label{eq_decompose}
f_{\vz,z_{h} |\vvarsigma} (\vw,w_{h}|\vs)= \prod_{t=1}^{h}
\left(w_{t} \sigma_t^2 \right)^{-\frac{1}{2}} g\left( \frac{w_{t} }{\sigma_t^2} \right)
\end{equation}
where $\sigma^2_t$ depends on $w_{t-j}$ $($the value of $z_{t-j}=x_{t-j}^2$$)$ and $s_{t-j}$ $($the sign of $x_{t-j}$$)$ for $j=1,\dots,t-1$ via \eqref{eq_GARCH11}.
\end{lemma}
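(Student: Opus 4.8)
The plan is to establish the three claims of Lemma \ref{lemma01} in sequence, each following from the symmetry of $g$ and the recursive structure of \eqref{eq_GARCH11}. First I would prove the symmetry of $f_{x_t}$ by induction on $t$. The base case is $t=1$: conditional on the time-$0$ information, $\sigma_1^2$ is a known constant, so $x_1=\sigma_1\varepsilon_1$ has density $\sigma_1^{-1}f_\varepsilon(u/\sigma_1)=\sigma_1^{-1}g(u^2/\sigma_1^2)$, which is manifestly even in $u$. For the inductive step, note that the conditional density of $x_t$ given the past depends on the past only through $\sigma_t^2$, which in turn depends on $x_{t-1}^2,\dots,x_1^2$ and on the signs $\varsigma_{t-1},\dots,\varsigma_1$ through the $\alpha_{t-j}$ terms. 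The key observation is that flipping the sign of any $x_{t-j}$ flips $\varsigma_{t-j}$ but leaves $x_{t-j}^2$ unchanged; one then checks that the resulting change in $\sigma_t^2$ is absorbed when one also integrates over the (symmetric) conditional law of the corresponding innovation. Equivalently, and more cleanly, one writes $x_t=\sigma_t\varepsilon_t$ where $\sigma_t$ is independent of $\varepsilon_t$ (measurable w.r.t.\ the past) and $\varepsilon_t$ is symmetric, so $x_t$ is symmetric conditionally on $\sigma_t$, hence unconditionally.

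The identity \eqref{eq_predictive_Gauss_xt} is then the standard change-of-variables for $z_t=x_t^2$: for $w>0$, $F_{z_t}(w)=\Pr(x_t^2\le w)=\Pr(-\sqrt w\le x_t\le\sqrt w)=2\int_0^{\sqrt w}f_{x_t}(u)\,\rd u$ using the symmetry just established, and differentiating gives $f_{z_t}(w)=f_{x_t}(\sqrt w)/\sqrt w$, i.e.\ $f_{x_t}(u)=f_{z_t}(u^2)|u|$. The statement $\Pr(\varsigma_t=\pm1)=\tfrac12$ is immediate from the symmetry of $f_\varepsilon$ (so $\Pr(\varepsilon_t<0)=\Pr(\varepsilon_t>0)=\tfrac12$), together with $\Pr(\varepsilon_t=0)=0$.

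For the factorisation \eqref{eq_decompose}, I would condition sequentially. Conditional on $\vvarsigma=\vs$ and on $z_1=w_1,\dots,z_{t-1}=w_{t-1}$, the quantity $\sigma_t^2$ computed via \eqref{eq_GARCH11} is a deterministic function of $\omega,\alpha,\beta,\lambda,x_0,\sigma_0$ and of $(w_1,s_1),\dots,(w_{t-1},s_{t-1})$; and $z_t=x_t^2=\sigma_t^2\varepsilon_t^2$, where $\varepsilon_t$ is independent of everything dated $t-1$ and earlier. Hence the conditional density of $z_t$ given the past and given $\vvarsigma$ is, by the one-step change of variables $w_t=s_t^2 w$ with $w=\epsilon^2$ and $f_{\varepsilon^2}(w)=w^{-1/2}g(w)$, equal to $(w_t\sigma_t^2)^{-1/2}g(w_t/\sigma_t^2)$. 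Multiplying these conditional densities for $t=1,\dots,h$ via the chain rule for densities yields \eqref{eq_decompose}. Here one must be slightly careful that conditioning on the signs $\vvarsigma$ does not disturb the conditional law of the magnitudes: this is exactly the independence of $|\varepsilon_t|$ and $\sgn(\varepsilon_t)$ for a symmetric distribution, which is the main (though not deep) point to get right, and is the place where the symmetry hypothesis on $g$ is genuinely used beyond the first two claims.

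Overall the main obstacle is bookkeeping rather than any substantive difficulty: one must keep straight that $\sigma_t^2$ depends on past signs (through the $\lambda 1_{x_{t-j}<0}$ terms) but not on the sign of $x_t$ itself, so that conditioning on $\vvarsigma$ is legitimate and compatible with the sequential conditioning argument, and that the Jacobian factors $(w_t)^{-1/2}$ from each squaring combine with the scale factors $(\sigma_t^2)^{-1/2}$ exactly as displayed.
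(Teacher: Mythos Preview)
Your proposal is correct and follows essentially the same route as the paper: the standard two-to-one change of variables for $z_t=x_t^2$ combined with symmetry of $f_{x_t}$ gives \eqref{eq_predictive_Gauss_xt}, and the factorisation \eqref{eq_decompose} is obtained by sequential conditioning. If anything, you are more explicit than the paper, which simply invokes the transformation theorem (citing \cite{Mood1974}) and then states that \eqref{eq_decompose} ``follows from definitions and \eqref{eq_predictive_Gauss_xt}''; your remark that the independence of $|\varepsilon_t|$ and $\sgn(\varepsilon_t)$ under symmetry is what makes the conditioning on $\vvarsigma$ harmless is exactly the point the paper leaves implicit.
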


Next denote the set of all possible $h-1$ sign vectors $\vvarsigma$ by $\calS$, $\#\calS=2^{h-1}$.
Densities are first computed conditionally on $\vvarsigma$ and later they are marginalized
with respect to it. Here, conditioning on $\vvarsigma$ is relevant only for the GJR case $\lambda \neq 0$.

The basic building block is given by the expression in \eqref{eq_decompose}. This density can be marginalised with respect to $\vz$ as follows
\begin{equation}
f_{z_{h}|\vvarsigma}(w_{h}|\vs)=\int_{\mathbb{R}_{+}^{h-1}}f_{\vz,z_{h}|\vvarsigma%
}(\vw,w_h|\vs)\mathrm{d}\vw. \label{eq_integral_I}
\end{equation}%
Finally, $f_{z_{h}|\vvarsigma}(w_{h}|\vs)$ can be marginalised with respect to the signs $\vvarsigma$ using the mutual independence of the signs $\varsigma _{t-j}$ and the fact that $\mathrm{\Pr }(\varsigma _{t}=\pm 1)=\frac{1}{2}$
for all $t$, due to the symmetry of $g$. One hence finds
\begin{equation}
f_{z_{h}}(w_{h})=\sum_{\vs}~f_{z_{h}|\vvarsigma}(w_{h}|\vs)\mathrm{\Pr }(\vs%
)=2^{-h+1}\sum_{\vs}~f_{z_{h}|\vvarsigma}(w_{h}|\vs) \label{eq_fz_overall}
\end{equation}%
where the sum $\sum_{\vs}$ is over $s_{j}\in \{-1,1\}$, for $j=1,\dots ,h-1$.
The \pof\ density $f_{z_{h}}(w_{h})$ is found by combining \eqref{eq_fz_overall}, \eqref{eq_integral_I}, \eqref{eq_decompose}, \eqref{eq_predictive_Gauss_xt}.

The next Lemma reports a recursion for the volatility process, that turns out to be useful when solving the integral in \eqref{eq_integral_I}. In the Lemma,
the following notation is used:
for $t=1,\dots ,h-1$,
let $y_{t}:=\alpha _{t}z_{t}/(\beta \sigma
_{t}^{2})=\alpha _{t}x_{t}^{2}/(\beta \sigma _{t}^{2})=\alpha
_{t}\varepsilon _{t}^{2}/\beta $ and
$\vy :=(y_{1},\dots ,y_{h-1})^{\prime }$, where $\vv:=(v_{1},\dots
,v_{h-1})^{\prime }$ denotes a value of $\vy$.

\begin{lemma}[Volatility and transformations]\label{Lemma_2}
The volatility process can also be written
\begin{equation}\label{eq_vola}
\sigma _{h}^{2}=\omega +(1+y_{h-1})\beta \sigma _{h-1}^{2}\qquad y_{h}:=\frac{%
\alpha_h }{\beta }\varepsilon _{h}^{2}.
\end{equation}%
For $h\geq 2$, $\sigma _{h}^{2}$ has the following recursive expression in
terms of $y$'s
\begin{equation}
\sigma _{h}^{2} =\omega +\left( 1+y_{h-1}\right) \left\{ \omega \beta +\left(
1+y_{h-2}\right) \left( \dots \left( \omega \beta ^{h-2}+\left(
1+y_{1}\right) \beta ^{h-1}\sigma _{1}^{2}\right) \right) \right\}
\label{eq_sigma2_h}
\end{equation}%
with $\sigma _{1}^{2}=\omega +\beta \sigma _{0}^{2}+\alpha _{0}x_{0}^{2}$,
which is measurable with respect to the information set at time $0$.
Moreover, one has
\begin{equation}\label{eq_last_I}
f_{z_{h}|\vvarsigma}(w_{h}|\vs)=\left( \frac{\gamma _{h}}{w_{h}}\right) ^{%
\frac{1}{2}}\int_{\mathbb{R}_{+}^{h-1}}\prod_{t=1}^{h-1}\left( v_{t}^{-\frac{%
1}{2}}g\left( \frac{\beta }{\PP{a} _{t}}v_{t}\right) \right) \cdot \sigma
_{h}^{-1}g\left( \frac{w_{h}}{\sigma _{h}^{2}}\right) \mathrm{d}\vv,
\end{equation}
where
$\gamma _{h}:=\beta
^{h-1}/(\prod_{t=1}^{h-1}\PP{a}  _{t})$ and \PP{$a _{t} = \alpha + \frac{1}{2}\lambda (1-s_t)$
is the value of $\alpha _{t}$ corresponding to $\varsigma_{t}=s_t$.}
\end{lemma}

\section{Main results}
\label{sec_main}
The main results are summarised in Theorem \ref{theorem_1} below. Before
stating it, an auxiliary assumption is introduced. Define $\theta :=\omega /2\sigma
_{1}^{2}$, \PP{and note that this is bounded by 0 and $\frac{1}{2}$ as $\alpha$ and $\beta$ vary, $0<\theta \leq \frac{1}{2}$. Moreover define the following function of $\theta$: $\myb:=\myb(\theta):=-\theta +\sqrt{\theta ^{2}+2 \theta }$, which is used in the next Assumption.}

\begin{ass}
\label{ass_1}$\;$ \newline \vspace{-0.5cm}

\begin{itemize}
\item[a.] For $h=3$, let $\beta \geq \myb$;

\item[b.] For $h>3$ let $\beta \geq \max (\frac{1}{2},\myb)$.
\end{itemize}
\end{ass}

\noindent It can be noted that $\sup_{\theta}\myb(\theta)=\lim_{\theta\rightarrow \frac{1}{2} }\myb(\theta)=\frac{-1+\sqrt{5}}{2}\approx 0.618\,03$, as $%
\sigma _{1}^{2}>\omega $.
In Figure \ref{fig:h9a}, the area above the curve represents the
set $\beta \geq \max (\frac{1}{2},\myb)$ for $0<\theta \leq
\frac{1}{2}$.

\begin{figure}[ht]
\caption{$\protect\myb$ as a function of $\protect\theta:=\omega /(2 \sigma_1^2)$. Blue line: $\myb:=\myb(\theta):=-\theta +\sqrt{\theta ^{2}+2 \theta }$. Shaded area: region $\beta \geq \max (\frac{1}{2},\myb)$, see Assumption \ref{ass_1}.}
\label{fig:h9a}
\centering
\begin{tikzpicture}[scale=1] 
\begin{axis}[
axis lines=middle,
axis line style={-},
ymajorgrids=true,
xmajorgrids=true,
x label style={at={(current axis.right of origin)},anchor=north, below=5mm},
y label style={at={(current axis.above origin)},anchor=west, left=15mm, below=5mm},
xlabel={$\theta$},
ylabel={$\beta$},
xmin=0,
xmax=0.5,
ymin=0,
ymax=1,
xtick={0,0.125,0.25,0.375,0.5},
ytick={0,0.2,...,1},
x tick label style={
        /pgf/number format/.cd,
        fixed,
        fixed zerofill,
        precision=3,
        /tikz/.cd
    }
]

\draw[fill=gray!15] (0,0.5) -- (0.25,0.5) --
plot[smooth, samples=100, domain={0.25}:0.5] (\x,{-(\x)+sqrt((\x)^(2)+2*(\x))}) --
plot[smooth, samples=2, domain=0.5:0] (\x,{1}) -- cycle;

\draw[line width=2pt,color=blue,smooth,samples=100,domain=0:0.5] plot(\x,{-(\x)+sqrt((\x)^(2)+2*(\x))});
\end{axis}
\end{tikzpicture}
\end{figure}
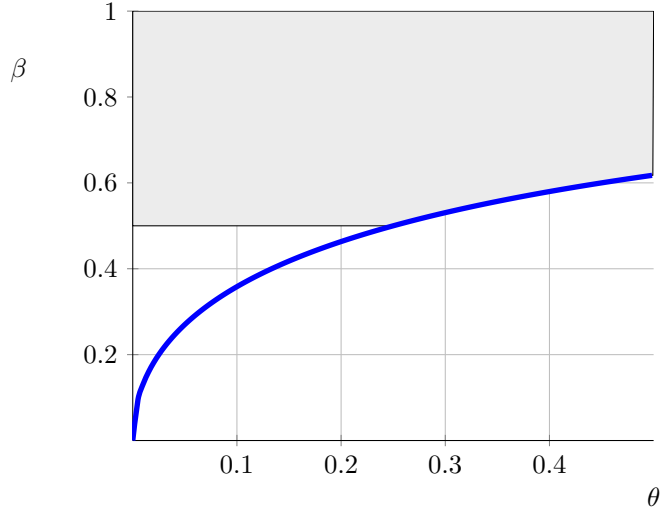

\PP{Note that $\myb$ depends on the ratio $\theta = \omega / (2 \sigma_1^2)$, where $\sigma_1^2$ is the last known value in the information set. Relatively large values of $\sigma_1^2$ correspond to $0<\theta\leq\frac{1}{8}$, moderate values to $\frac{1}{8}<\theta\leq\frac{1}{4}$ and small values to $\frac{1}{4}<\theta\leq\frac{1}{2}$. Note in fact, for instance, that $\theta\leq\frac{1}{4}$ corresponds to $\omega \leq \sigma_0^2 (\beta+\alpha \varepsilon_0^2)=\alpha x_0^2+ \beta\sigma_0^2$, an inequality one expects to be frequently valid.
Note that $\myb(\frac{1}{8})=0.39039$, $\myb(\frac{1}{4})=\frac{1}{2}$, $\myb(\frac{1}{2})=0.61803$, so that
Assumption \ref{ass_1} requires
$\beta>0.39039$ (respectively $0.5)$ when the last observed volatility is relatively large (respectively moderate) for the results in the paper for $h=3$ to hold. Only low values of the last observed volatility correspond to $\beta>\frac{1}{2}$.}

\PP{For $h>3$, $\beta \geq \max (\frac{1}{2},\myb)$. Hence, unless the last observed volatility $\sigma_1^2$ is very low i.e. $\frac{1}{4}<\theta\leq\frac{1}{2}$, the sufficient condition (which could be possibly further analytically improved) requires $\beta \geq \frac{1}{2}$.}

In Theorem \ref{theorem_1} below, $\Psi $ is the confluent
hypergeometric function of the second kind, also known as Tricomi function,
see \cite{Abadir1999} and \cite{Gradshteyn2007}, section 9.21, whose integral representation is,
\begin{equation}
\Psi(a;c;z) = \frac{1}{\Gamma(a)}\int_{\mathbb{R}_{+}}\exp \left( -zt\right)
t^{a-1}\left( 1+t\right) ^{c-a-1}\mathrm{d}t \label{eq:tricomi}
\end{equation}
with $\Re(z)>0, \Re(a)>0$.

\PP{The $\Psi$ function is used to define the following quantities:
\begin{align}\nonumber
A_{h}(r) &=\left( \beta \sigma _{1}^{2}\right) ^{-\frac{1}{2}}\pi ^\frac{h-1}{2}  \sum_{k_{1},\dots,k_{h-2}}\binom{r}{k_{1}}\binom{r-k_{1}}{k_{2}}\cdots
\binom{r-K_{h-3}}{k_{h-2}}\omega ^{K_{h-2}}\beta ^{(h-2)r-U_{h-2}} \cdot \\ &  \quad \cdot \prod_{t=1}^{h-2}
\Psi \left( \frac{1}{2},r-K_{t}+\frac{3}{2};\frac{\beta}{2\alpha_{h-t}}\right)
\Psi \left( \frac{1}{2},r-K_{h-2}+\frac{3}{2};\frac{\omega+\beta\sigma_1^{2}}{2\alpha_1\sigma_1^{2}}\right),
\label{eq_long_before}
\end{align}
where $K_0:=0$, $K_i:=\sum_{t=1}^{i}k_i$, $U_{j}:=\sum_{i=1}^{j}K_{i}=\sum_{i=1}^{j} (j-i+1) k_{i}$.
The multiple sum is defined for $h\geq3$ as
$\sum_{k_{1},\dots,k_{h-2}}:=\sum_{k_{1}=0}^{r}\sum_{k_{2}=0}^{r-k_{1}}\cdots
\sum_{k_{h-2}=0}^{r-K_{h-3}}$, where the individual sums extend to $\infty$ if $r\notin \mathbb{N}$. For $h=2$
the sum $\sum_{k_{1},\dots,k_{h-2}}$ and the product $\prod_{t=1}^{h-2}$ are empty and
\eqref{eq_long_before} reduces to
$A_{2}(r) =\left( \beta \sigma _{1}^{2}\right) ^{-\frac{1}{2}}
\pi ^\frac{1}{2}\Psi \left( \frac{1}{2},r+\frac{3}{2};\frac{\omega+\beta\sigma_1^{2}}{2\alpha_1\sigma_1^{2}}\right)$.}


\begin{thm}[GARCH(1,1) \pof\ density]
\label{theorem_1}Assume that $\varepsilon _{t}$ are i.i.d. $\rN(0,1)$ and let
Assumption $\ref{ass_1}$ hold; then one has, for $h\geq 2$, \PP{$w_{h}\geq 0$ and $-\infty < u_{h}<\infty$}
\begin{align}
f_{z_{h}}(w_{h}) &=(2\pi )^{-\frac{h}{2}}w_{h}^{-\frac{1}{2}} \sum_{j=0}^{\infty }\frac{1}{j!}
\PP{(-\rho_w)^{j}} c_{j},\qquad \PP{\rho_w:=\frac{w_{h}}{2(\omega+\beta\sigma_1^2)}}
\label{eq_predictive_Gaussian_2} \\
f_{x_{h}}(u_{h}) &=f_{z_{h}}(u_{h}^{2})\left\vert u_{h}\right\vert =
(2\pi )^{-\frac{h}{2}}\sum_{j=0}^{\infty }\frac{1}{j!}
\PP{(-\rho_u)^{j}}
c_{j}, \qquad \PP{\rho_u:=\frac{u_{h}^2}{2(\omega+\beta\sigma_1^2)}}
\label{eq_predictive_Gaussian}
\end{align}%
where $c_{j}:=2^{-h+1}\sum_{\vs\in \calS}c_{j,\vs}$ and
$c_{j,\vs}$ is defined as
$
c_{j,\vs}:= \gamma_h^{\frac{1}{2}}A_h\left(-j- \frac{1}{2} \right),
$
where $A_h(\cdot)$ is defined in \eqref{eq_long_before} and $\gamma_h$ in Lemma $\ref{Lemma_2}$.
The expressions in \eqref{eq_predictive_Gaussian_2} and \eqref{eq_predictive_Gaussian} are absolutely summable for any finite $w_h$ or $u_h$.
\end{thm}

\begin{proof}
See Appendix.
\end{proof}
Observe that the expression for $c_{j,\vs}$ does not depend on the points of evaluation $w_h$ or $u_h$, and hence
the $c_{j,\vs}$ coefficients can be computed only once for the whole densities.
One can prove, see Lemma \ref{lemma-Psi} in the Appendix, that $\Psi(a;c;z)\rightarrow 0$ for $c\rightarrow-\infty$. This implies that the terms
\PP{$\Psi \left( \frac{1}{2},r-K_{t}+\frac{3}{2};z\right)$
converge to 0 for large $k_t$ in the product in \eqref{eq_long_before}.}

Note also that for $h=2$, equation (\ref{eq_predictive_Gaussian})
holds for any value of $\beta $, while for $h=3$ it holds if and only if $%
\beta \geq \myb$. For $h>3$, the validity of the (\ref%
{eq_predictive_Gaussian}) is guaranteed by the sufficient condition $\beta
\geq \max (\frac{1}{2},\myb)$, which is, however, not necessary.

The line of proof of Theorem \ref{theorem_1} is the following:\ for $h=2$
the integral is solved by substitution and by using equation (\ref%
{eq:tricomi}). 
For $h\geq 3$, subsequent (negative) binomial expansions of expression (\ref%
{eq_sigma2_h}) for $\sigma _{t}^{2}$ are required, whose validity is ensured
by the inequality
\begin{equation*}
\omega \left( 1-\sum_{i=1}^{j-1}\beta ^{i}\right) \leq \beta ^{j}\sigma
_{1}^{2}, \qquad j \geq 2  \label{eq_ineq}
\end{equation*}%
which is satisfied under Assumption \ref{ass_1}, see Lemma \ref%
{Lemma_beta_cond} in the Appendix.

Immediate consequences of Theorem \ref{theorem_1} are collected in the
following corollary.

\begin{coro}[C.d.f. and moments]
\label{Corollary_2}The \pof\ c.d.f.s of $z_{h}$
and $x_{h}$ are given by \PP{the following expressions for $h\geq 2$, $w_{h}\geq 0$ and $-\infty < u_{h}<\infty$}
\begin{align}\nonumber
F_{z_{h}}(w_{h}) &=(2\pi)^{-\frac{h}{2}}\sum_{j=0}^{\infty }
\frac{w_{h}^{\frac{1}{2}}}{j!\left( j+\frac{1}{2}\right) } (- \rho_w)^j c_{j}, \qquad \PP{\rho_w:=\frac{w_{h}}{2(\omega+\beta\sigma_1^2)}}
\\
\label{eq_predictive_cdf}
F_{x_{h}}(u_{h}) &=
\frac{1}{2}+(2\pi)^{-\frac{h}{2}}\sum_{j=0}^{\infty }
\frac{u_{h}}{j!\left( 2j+1\right) }
 (- \rho_u)^j c_{j}, \qquad \PP{\rho_u:=\frac{u_{h}^2}{2(\omega+\beta\sigma_1^2)}}
\end{align}%
with \PP{$0$ odd moments for $x_{h}$} and even moments%
\PP{\begin{equation}\label{eq_even_moments}
\E(x_{h}^{2m})=\E(z_{h}^{m})=
2^{m-\frac{3}{2}(h-1)}\pi ^{-\frac{h%
}{2}}\Gamma \left( m+\frac{1}{2}\right)
(\omega+\beta \sigma_1^2)^{m + \frac{1}{2}}
\sum_{\vs\in %
\calS}
\gamma _{h}^{\frac{1}{2}} A_h (m)\qquad m=1,2,\dots
\end{equation}%
where $\gamma_{h}$ and $A_{h}(m)$ depend on $\vs$,
see their definitions in Lemma $\ref{Lemma_2}$ and in \eqref{eq_long_before}.
}
%
\end{coro}

Note that \PP{$A_{h}(m)$} in the
moments calculations are made of finite sums extending to $m$,
involving the Tricomi functions, which do not fall in the logarithmic case as in Theorem \ref{theorem_1}; see \cite{Abadir1999} for the logarithmic case.
In fact, $m-k\in \left\{ 0,1,\dots ,m\right\} $
implies that%
\begin{align*}
\Psi \left( \frac{1}{2};\frac{3}{2}+m-k;\xi \right) &=\frac{\Gamma \left(
\frac{1}{2}+m-k\right) }{\sqrt{\pi }}\xi ^{-\frac{1}{2}-m+k}\,_{1}F_{1}%
\left( -m+k;\frac{1}{2}-m+k; \xi \right) \\
&=\frac{\Gamma \left( m+\frac{1}{2}\right) }{\sqrt{\pi }k!\binom{m-\frac{1}{%
2}}{k}}\xi ^{-\frac{1}{2}-m+k}\sum_{j=0}^{m-k}\frac{\binom{m-k}{j}}{\binom{-%
\frac{1}{2}+m-k}{j}}\frac{\xi ^{j}}{j!}
\end{align*}%
is a finite sum, see \cite{Abadir1999}, which is proportional to the generalized Laguerre polynomial
$L_{m-k}^{(-1/2-m+k)}(\xi )$, where
$L_{i}^{(a )}(\xi ):= \sum_{k=0}^{i}\left( a +1+k\right)_{i-k}\left( -i\right) _{k}\frac{\xi ^{k}}{k!} $
see e.g. \cite{Abramowitz1964} Chapter 22.
\PP{For the moments of a GARCH(1,1), one can compare \eqref{eq_even_moments} with equations (34) and (35) in \cite{Baillie1992}.}


Some standardized densities of $x_{h}$ and the corresponding right tails are
plotted in Figure \ref{fig:n1} for $h=1,2,3,4$. The curve $h=1$ is the
standard Gaussian. Computations for Figures \ref{fig:n1}, \ref{fig:ratio} and \ref{fig:n1_tail_2} were performed in Mathematica.\footnote{When $x$ has mean 0 and standard deviation $s$, the standardized variate is $z=x/s$, with density
$f_z(a) = s f_x(sa)$.}

Figure \ref{fig:ratio} shows the standardized \pof\ densities
for $h=2$ and values of $\beta/\alpha$ that range from to
8.5 ($\alpha = 0.1, \beta = 0.85$) to 1/8.5 ($\alpha = 0.85, \beta = 0.1$).
This figure shows that the deviations from the Gaussian case of the \pof\ density can be substantial; the \pof\ densities are more similar to a Gaussian when $%
\beta /\alpha $ is large.
Figure \ref{fig:n1_tail_2} shows the tails for the GJR-GARCH(1,1) case.

\begin{figure}[ht]
\caption{Prediction densities $f_{x_h}(u_h)$ (left panel) and zoom of
the right tails (right panel) for standardized $x_h$, $h=1,2,3,4$, $\protect%
\omega = 0.1, \protect\alpha = 0.1 , \protect\beta = 0.7, \protect\sigma^2_0
= 1; x^2_0 = 1, \protect\lambda =0$. }
\label{fig:n1}
\par
\begin{center}
\includegraphics[scale=0.6]{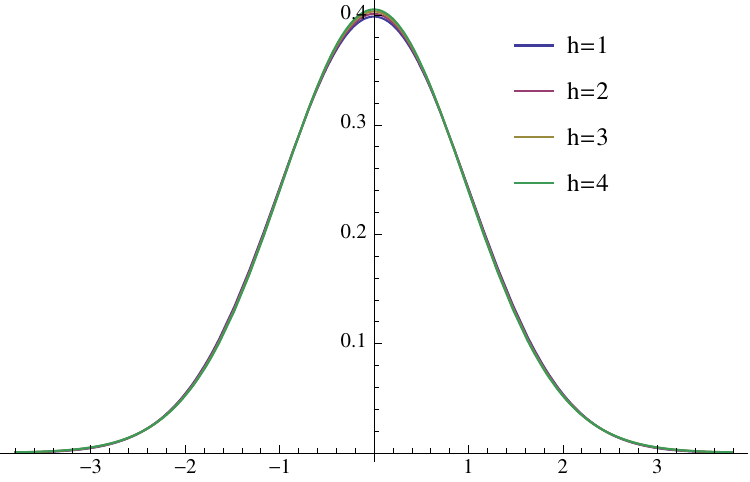} %
\includegraphics[scale=0.6]{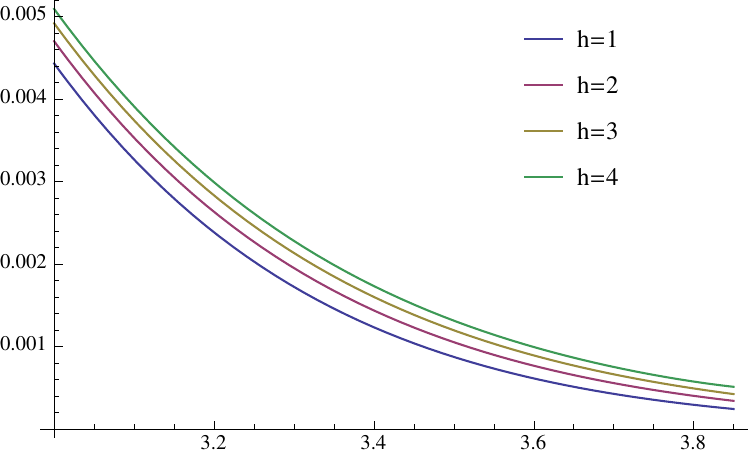}
\end{center}
\end{figure}

\begin{figure}[ht]
\caption{Prediction density $f_{x_2}(u_2)$ for standardized $x_2$, $\protect%
\omega = 0.1, \protect\sigma^2_0 = 1; x^2_0 = 1$ varying values of $(\alpha,\beta)$}
\label{fig:ratio}
\begin{center}
\includegraphics[scale=0.6]{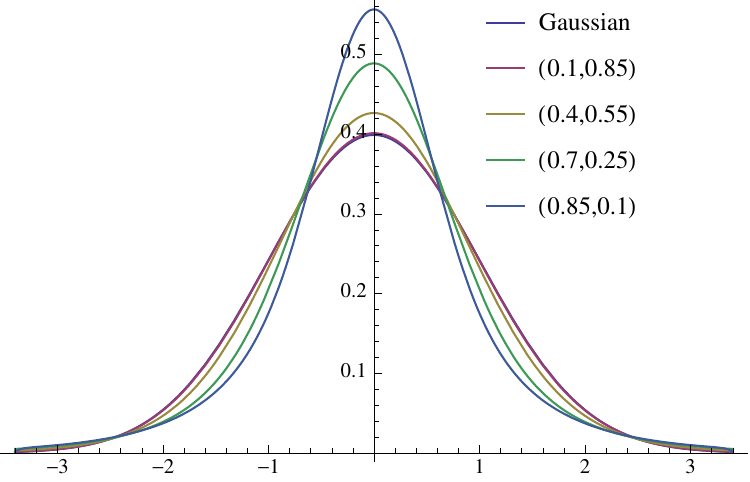} %
\includegraphics[scale=0.6]{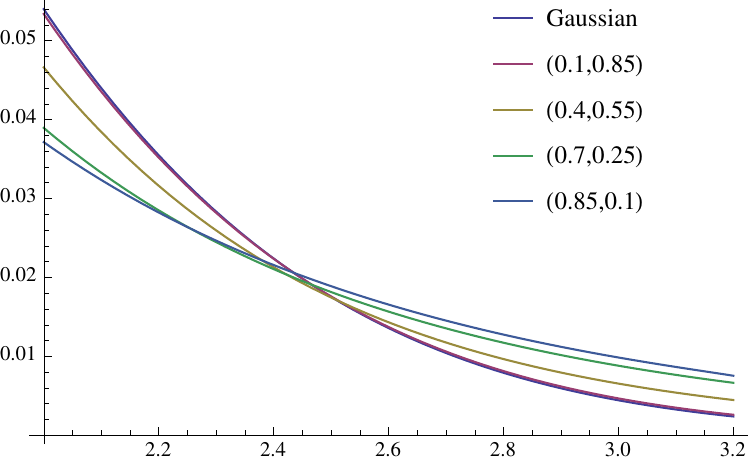}
\end{center}
\end{figure}

\begin{figure}[ht]
\caption{Right tail of $f_{x_h}(u_h)$ for standardized $x_h$, $h=1,2,3$, in
blue, red and green respectively, $\protect\omega = 0.25, \protect\alpha =
0.1 , \protect\beta = 0.7, \protect\sigma^2_0 = 1; x^2_0 = 1, \protect%
\lambda =0.2$ ($h=1$ is standard Gaussian) }
\label{fig:n1_tail_2}
\begin{center}
\includegraphics[scale=0.6]{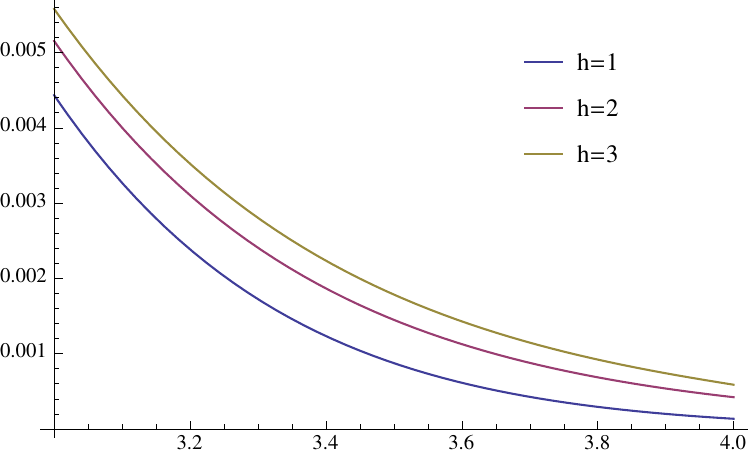}
\end{center}
\end{figure}

\PP{The formulae in Theorems \ref{theorem_1} and Corollary \ref{Corollary_2} are alternating in sign. While (absolutely) convergent, the associated series was found in practice to be ill-behaved numerically when $\rho_u:=u^2/(\omega+\beta\sigma_1^2)$ is very large, causing the oscillations in the terms of the series to become large before decreasing in amplitude toward zero, where `large' refers to the greatest floating point number handled by the computer.
Note that this is can be linked to large $u$ and/or small $\omega+\beta\sigma_1^2$.
This extreme behaviour implies accumulation of numerical errors, which can lead to inaccurate calculations of the \pof\ density.}

\PP{
\begin{example}[Numerical accuracy]\label{ex_num_acc}
One such case can be obtained for the density of $x_2$ in formula \eqref{eq_predictive_Gaussian}, $h=2$,
in the following way: select $u=4$ for $\omega =0.0000114, \alpha = 0.85, \beta = 0.14$, and choose $\sigma_0^2 = x_0^2 = \sigma^2:=\omega / (1-\alpha - \beta)=0.00114$. This results in $\rho_u = 53.\bar{3}$ for the standardized p.d.f. of $x_2/s$ with $s^2:=\omega + (\alpha + \beta)\sigma^2_1$. In this case, the oscillations of the terms in the series increase up to $\pm 4 \cdot 10^{20}$ around the $50^{th}$ term of the series, before oscillations decrease toward zero; the resulting series truncated after its first 100 terms gave the negative number $-2.9628 \cdot 10^{12}$. Calculations performed in MATLAB 2018a on an Intel i7 Windows 10 computer.
As a comparison, the same script applied to $u=2$ gave
$f_{x_2}(2)=0.03688432$
\end{example}
}

\PP{In order to address these numerical accuracy problems when $\rho_u=u^2/(\omega+\beta\sigma_1^2)$ is large, the following theorem presents a different set of formulae for the \pof\ density. This alternative set has the advantage to allow computations in the far tails of the density, at the price of a slightly higher implementation cost.}

\begin{thm}[Alternative formulae for the GARCH(1,1) \pof\ density]\label{theorem_2}
\PP{Under the same assumptions of Theorem $\ref{theorem_1}$ one has \PP{for $h\geq 2$, $w_{h}\geq 0$ and $-\infty < u_{h}<\infty$}
\begin{align}
f_{z_{h}}(w_{h}) &=(2\pi )^{-\frac{h}{2}}w_{h}^{-\frac{1}{2}} \e^{-\rho_w} \sum_{j=0}^{\infty }p_{j}(\rho_w)
 c^{\star}_{j}, \qquad \rho_w:= \frac{w_{h}}{2(\omega+\beta\sigma_1^2)}
\label{eq_predictive_Gaussian_2BIS} \\
f_{x_{h}}(u_{h}) &=f_{z_{h}}(u_{h}^{2})\left\vert u_{h}\right\vert =
(2\pi )^{-\frac{h}{2}}\e^{-\rho_u} \sum_{j=0}^{\infty }p_{j}(\rho_u)
c^{\star}_{j}, \qquad \rho_u:= \frac{u_{h}^2}{2(\omega+\beta\sigma_1^2)}.
\label{eq_predictive_GaussianBIS}
\end{align}%
For $h=2$,
$p_{j}(\cdot)$ is defined as
$p_{j}(\rho) := \rho^j / j!$,
$c_{j}^{\star}:=2^{-1}\sum_{\vs\in \calS}c_{j,\vs}^{\star}$ and
\begin{equation}\label{eq_cjsstar}
c_{j,s}^{\star}:=\pi ^{\frac{1}{2}}(\sigma _{1}^{2}\alpha _{1})^{-\frac{1}{2}%
}
\left( \frac{1}{2}\right)_{j}
\Psi\left( j+\frac{1}{2},1; \frac{\omega +\beta \sigma _{1}^{2}}{2\alpha _{1}\sigma _{1}^{2}}\right),
\end{equation}
where $(a)_j:= \prod_{i=1}^{j-1}(a+i)$ denotes Pochhammer's symbol, see \cite{Abadir1999}.
}

\PP{
For $h\geq 3$,
$p_{j}(\rho ):=(-1)^{j}L_{j}^{(-1)}(\rho )$ where $L_{j}^{(-1)}(\rho
):=\sum_{k=0}^{j}\left( k\right) _{j-k}\left( -j\right) _{k}\frac{\rho ^{k}}{%
k!}$ is a generalized Laguerre polynomial\footnote{%
$L_{j}^{(a)}(x)$ is the standard notation, see e.g. \cite{Abramowitz1964} Chapter 22.}, with the convention $\left( 0\right) _{0}:=1$;
moreover
$c_{j}^{\star}:=2^{-h+1}\sum_{\vs\in \calS}c_{j,\vs}^{\star}$ where
$c_{j,s}^{\star}$ is defined as
\begin{align}
&c_{j,\vs} ^{\star}:=\pi ^{\frac{h-1}{2}}\left( \sigma _{1}^{2}\right) ^{-\frac{1}{2%
}}\cdot  \nonumber\\
& \cdot  \sum_{k_{1}=0}^{\infty }\sum_{k_{2}=0}^{\infty
}\cdots \sum_{k_{h-2}=0}^{\infty }\binom{-\frac{1}{2}%
}{k_{1}}\alpha _{h-1}^{-\frac{1}{2}}\Psi \left( \frac{1}{2},j+1-k_{1};\frac{%
\beta }{2\alpha _{h-1}}\right) \cdot \beta ^{j(h-2)-U_{h-2}}\cdot \left(
\frac{\omega +\beta \sigma _{1}^{2}}{\omega }\right) ^{j-K_{h-2}}\cdot
\label{eq_c_r_h_varsigmaBIS3} \\
&\cdot  \prod_{t=2}^{h-2}\alpha _{h-t}^{-\frac{1}{2}}\binom{j-\frac{1}{2}%
-K_{t}}{k_{t}}\Psi \left( \frac{1}{2},j+1-K_{t};\frac{\beta }{2\alpha _{h-t}}%
\right) \alpha _{1}^{-\frac{1}{2}}\Psi \left( \frac{1}{2},j+1-K_{h-2};\frac{%
\omega +\beta \sigma _{1}^{2}}{2\alpha _{1}\sigma _{1}^{2}}\right)
\nonumber
\end{align}
with
$K_{0}:=0$, $K_{t}:=\sum_{i=1}^{t}k_{i}$,
$U_{j}:=\sum_{i=1}^{j}K_{i}=\sum_{i=1}^{j} (j-i+1) k_{i}$.
The expressions in \eqref{eq_predictive_Gaussian_2BIS}, \eqref{eq_predictive_GaussianBIS},
\eqref{eq_cjsstar},
\eqref{eq_c_r_h_varsigmaBIS3}
are summable for any finite $w_h$ or $u_h$.}
\end{thm}

\PP{The improved numerical performance of formula \eqref{eq_predictive_GaussianBIS}
is linked to the presence of the term $\e^{-\rho_u}$ when $\rho_u:= \frac{u^2}{2(\omega+\beta\sigma_1^2)}$ is large.
In fact for $u^2 \rightarrow \infty$ the term $\e^{-\rho_u}\rightarrow 0$, so that $\e^{-\rho_u}$ compensates the large terms of the type $\rho_u^j$ that appear in the sum for large $u^2$. For $u^2 \rightarrow 0$, the term $\e^{-\rho_u}\rightarrow 1$, so that $\e^{-\rho_u}$ does not influence the sum for small values of $u^2$. Note, moreover, that all the terms in the series \eqref{eq_predictive_Gaussian_2BIS} and \eqref{eq_predictive_GaussianBIS} are positive, so that there are no oscillations associated with different signs for the terms in the series.}

\PP{
\begin{example}[Numerical accuracy - continued]\label{ex_num_acc2}
In the same setup of Example \ref{ex_num_acc}, formula \eqref{eq_predictive_GaussianBIS} is numerically accurate.
In fact, all the terms in the series were found to be bounded by $2 \cdot 10^{-4}$, with value of the density equal to
$0.002953901$,
again using the first 100 terms of the series. Calculations were performed in the same environment as in Example \ref{ex_num_acc}.
As a comparison, the same script applied to $u=2$ gave
$f_{x_2}(2)=0.03688291$,
which agrees with formula \eqref{eq_predictive_Gaussian}
in
Example \ref{ex_num_acc} up to the 5th digit (discrepancy equal to $1.4017 \cdot 10^{-6}$).
\end{example}
}

\PP{The slightly higher implementation cost of formula \eqref{eq_predictive_GaussianBIS} is associated with the
presence of the generalised Laguerre polynomial in $p_j(\rho)$ for $h\geq3$. They are finite sums and add a moderate cost in terms of computations.}
\PP{Similar derivations to Corollary \ref{Corollary_2} can be performed on \eqref{eq_predictive_Gaussian_2BIS}, \eqref{eq_predictive_GaussianBIS}
to derive the corresponding c.d.f.s.
}

\section{Stationary distribution}
\label{sec_form}

The limit representation of the random variable $x_{h}$ in the stationary
case can be found in \citet{Francq2010} Theorem 2.1 page 24. The tail
behaviour of the limit distribution is reviewed in \citet{Mikosch2000} and %
\citet{Davis2009a}. The tails of the stationary distribution of both the
volatility and of the GARCH process $x_{t}$ are of Pareto type,
$\Pr(x_t > u)\approx c u^{-2\kappa}$ say, where $\kappa >0$ is a tail index.
These properties are based on results for random difference equations and
renewal theory obtained in \cite{Kesten1973} and \cite{Goldie1991}.

The tail index of the stationary distribution depends on the coefficient $%
\alpha $ and $\beta $ of the GARCH(1,1) process $x_{t}$ as well as on the
one-step-ahead distribution. Examples of the tail index are given in \cite%
{Davis2009a}; for Gaussian innovations, $\kappa =14.1$ for $\alpha =\beta
=0.1$, while $\kappa =1$ for $\alpha =1-\beta$. 

The index $\kappa$ is the unique solution of $\E((\alpha \varepsilon_t ^2 +\beta)^{\kappa} )=1$.
When $\kappa$ is an integer, the expression simplifies to
\begin{equation}\label{eq_DM10}
1= \E((\alpha \varepsilon_t ^2 +\beta)^{\kappa} ) = \beta ^{\kappa} \sum_{i=0}^{\kappa} \binom{\kappa}{i}\left(\frac{\alpha}{\beta}\right)^i \E(\varepsilon_t^{2i}),
\end{equation}
see \citet{Davis2009a} eq. (10). Substituting the moments $\E(\varepsilon_t^{2n})$ from the $\chi^2$ distribution, and assigning values to $\alpha /\beta $ over a grid of pre-specified values, one can solve \eqref{eq_DM10} for $\beta$, and hence for $\alpha = (\alpha /\beta ) \beta$. This allows to compute (values of) the surface $\kappa(\alpha,\beta)$.
Figure
\ref{fig:abk} reports the level curves of $\kappa(\alpha,\beta)$ as a function of $\alpha$ and $\beta$ obtained in this way. The figure also reports the lines where $\beta/\alpha$ is constant. It is seen that, for large values of $\beta / \alpha$, $\kappa$ and $\beta / \alpha$ increase roughly together. This association is not present for small values of $\beta / \alpha$.

\begin{figure}[ht]
\caption{Level curves of $\kappa$ as a function of $\alpha$ and $\beta$ in the Gaussian case. Dashed lines represent loci where $\beta / \alpha$ is constant.}
\label{fig:abk}
\begin{center}
\includegraphics[scale=0.8]{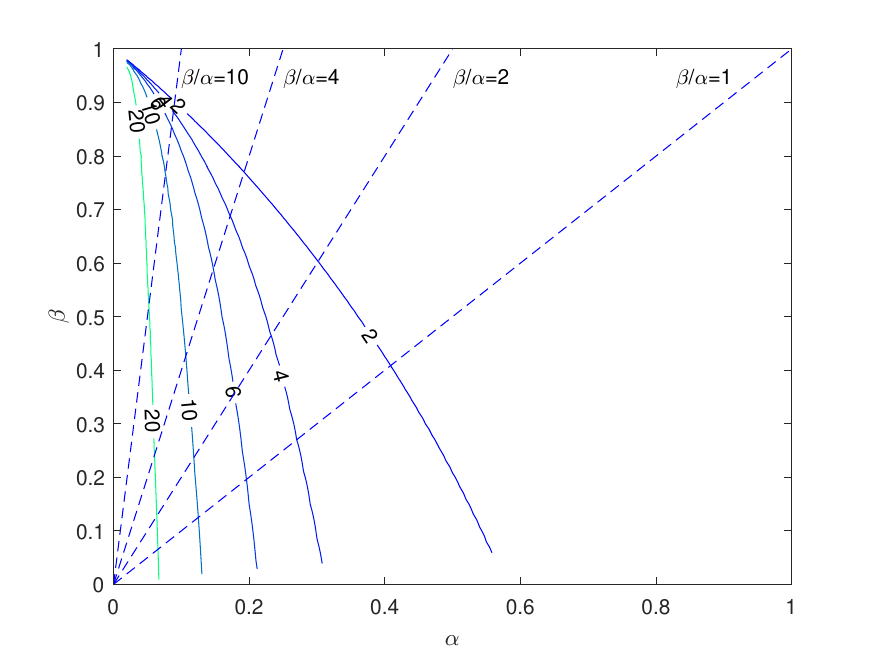}
\end{center}
\end{figure}

The relation between $\beta / \alpha$ and fat-tailedness of the \pof\ density for finite horizon $h$ can be illustrated using the case $h=2$. From
Theorem \ref{theorem_1},
\begin{equation*}
f_{x_{2}}(u_{2})= \frac{1}{\sqrt{2\pi }\tilde\sigma_2 } \sum_{j=0}^{\infty }%
\frac{1 }{j!} \left(-\frac{1}{2}\frac{u_2^2}{\tilde\sigma^2_2}\right)^{j}%
\sqrt{z} \Psi\left(\frac{1}{2}; 1-j;z \right)
\end{equation*}
where $\tilde \sigma _{2}^{2} = \omega + \beta \sigma _{1}^{2}$ and%
\footnote{The quantity $\tilde \sigma _{2}^{2} := \omega + \beta \sigma _{1}^{2}$ can
be interpreted as the minimum value that $\sigma^2_{2} = \omega +\left(
1+y_{1}\right) \beta \sigma _{1}^{2} $ can take, in the ideal case when $%
\alpha = 0$ (thus $y_1=0$) and $\sigma_1^2$ is given, i.e. $x_2 \sim
\textrm{N}(0,\tilde\sigma^2_2)$.}
\begin{equation*}
 z := \frac{\omega + \beta \sigma_1^2 }{2\alpha\sigma^2_1}
= \frac{\tilde\sigma_2^2}{2\alpha\sigma^2_1} = \frac{1}{2}\left(\frac{\omega}{\beta \sigma_1^2} + 1 \right) \frac{\beta}{\alpha}.
\end{equation*}
Hence when $ {\beta} / {\alpha}\rightarrow \infty$ one has $z\rightarrow \infty$ with $\sqrt{z} \Psi\left(\frac{1}{2}; 1-j;z \right) = 1+ O(|z|^{-1})$, see \cite{Abramowitz1964}, eq. 13.1.8, so that all the Tricomi functions $\Psi_j$, for varying $j$, tend to one.\footnote{This is unlike in the case for fixed $z$ where the sequence of $\Psi_j$ is decreasing to 0 for increasing $j$.}
As a result, when $ {\beta} / {\alpha}\rightarrow \infty$ the \pof\ distribution converges to a $\textrm{N}(0,\tilde\sigma^2_2)$.

One concludes that both for the \pof\ density for $h=2$ and for the stationary distribution, the fat-tailedness of the distributions is small for large values of ${\beta}/ {\alpha}$, unless $\alpha$ is very close to 0.

\section{Comparing exact formulae with simulation-based methods}
\label{sec_appl}
This section describes the application of the formulae in the previous section to the calculation of the Value at Risk and of the Expected Shortfall, comparing them with alternatives based on Monte Carlo. \PP{This comparison is made under Gaussianity and Assumption \ref{ass_1}, so that the formulae in the paper can be applied. 
The analysis in this and the remaining sections is for generic forecast horizon $h=2,3,\dots$, while illustrations are made for $h=2$ and $\lambda=0$ for simplicity and without loss of generality.}


Let $p$ be some tail probability, such as 5\%, and let $\VaR_{h,p}$ be the
Value at Risk, defined as the (negative) of the $p$ quantile of the
\pof\ distribution, i.e. $p=\Pr (x_{h}<-\VaR_{h,p})$. Let also $\ES_{h,p}$
indicate the corresponding expected shortfall, i.e. $\ES_{h,p}:=-\E%
(x_{h}|x_{h}<-\VaR_{h,p})$, following standard notation, see e.g. \cite{Francq2015}.

Observe that $\ES_{h,p}$ may fail to exist when the underlying density has Cauchy tails.
One implication of the exact results in Theorem \ref{theorem_1} of Section \ref{sec_main} is that for finite $h$ the \pof\ of $x_{h}$ has thinner-than-Cauchy tails, and hence $\ES_{h,p}$ exists; this appears to be a central issue for the application of $\ES_{h,p}$ as a measure of risk.

The following subsections show first how the exact formulae can be applied in this context, and next their relative advantage over methods based on MC methods. The same advantages discussed for the quantification of the Value at Risk and the Expected Shortfall apply more generally to other functionals of the \pof\ distribution, as well as to the
nonparametric estimation of the \pof\ distribution itself. For brevity, these latter cases are not discussed in this paper in detail.

The rest of the section refers to the standardized \pof\ distribution of $x_{2}$
when $\omega=1.14 \cdot 10^{-5}$, $\alpha=0.131007$, $\beta=0.845708$, $\lambda=0$; these values are the ML estimates on a AR(2)-GARCH(1,1) model for the weekly S\&P500 stock index return from 1950-2018 reported in Table 11.4 in \cite{Linton2019}.
These values of $\alpha$ and $\beta$ are very similar to the median estimates in Table 1 of \cite{Bampinas2018} for the set of individual S\&P 1500 daily returns. In the calculations $\sigma_1^2$ was set equal to $\omega / (1-\alpha-\beta)$.
For these parameters, standard double precision was found to be sufficient for $h=2$ for a range of $|x|<6$ in standardized units.

\subsection{Exact calculations}\label{sebsec_exact}
Both $\VaR_{h,p}$ and $\ES_{h,p}$ can be calculated using the exact formulae in this paper. This subsection combines the use of results in Section \ref{sec_main} with numerical techniques to illustrate applications of these results. This approach is chosen to
keep derivations as simple as possible, even when the analytical results of Section \ref{sec_main} could be extended to replace numerical integration.

Consider first $\VaR_{h,p}$; this can be found as the root of the function $F_{x_{h}}(u)-p$, where $F_{x_{h}}(u)$ is given in \eqref{eq_predictive_cdf}, using
root-finding algorithms like 
Newton's method -- see e.g. \cite{Press_2007},
Chapter 9 -- where
\begin{equation}\label{eq_QNewton}
u_{n+1}=u_{n}-\frac{F_{x_{h}}(u_n)-p}{f_{x_{h}}(u_n)}.
\end{equation}%
Here $F_{x_{h}}(u)$ is given in \eqref{eq_predictive_cdf} and $f_{x_{h}}(u)$ is given in
\eqref{eq_predictive_Gaussian}; this typically requires a handful of function evaluations.

Consider next $\ES_{h,p}$; one can write
\begin{align}\label{eq_ES_0}
\ES_{h,p} & =-\frac{1}{p}\int_{-\infty }^{-\VaR_{h,p}}u~f_{x_{h}}\left( u\right) \mathrm{d}%
u=\frac{\VaR_{h,p}}{p}~F_{x_{h}}( -\VaR_{h,p}) +\frac{1}{p}\int_{-\infty
}^{-\VaR_{h,p}}F_{x_{h}}\left( u\right) \mathrm{d}u \\
 & =\VaR_{h,p} +\frac{1}{p}\int_{-\infty
}^{-\VaR_{h,p}}F_{x_{h}}\left( u\right) \mathrm{d}u \nonumber
\end{align}%
where the second equality follows by integration by parts, and the third because $F_{x_{h}}( -\VaR_{h,p})=p$ by definition.%
\footnote{Note that, whenever $\E(x)$ exists, one has $\lim_{x\rightarrow -\infty} ( -xF ( x )  ) =0$; in fact for $-\infty <u<x$  one has%
\begin{equation*}
0\leq \lim_{x\rightarrow -\infty }\left( -xF\left( x\right) \right)
=\lim_{x\rightarrow -\infty }\left( -\int_{-\infty }^{x}xf\left( u\right)
\mathrm{d}u\right) \leq \lim_{x\rightarrow -\infty }\left( -\int_{-\infty
}^{x}uf\left( u\right) \mathrm{d}u\right) =0.
\end{equation*}
}
This integral
can be evaluated numerically using $f_{x_{h}}(u)$ in \eqref{eq_predictive_Gaussian}, or $%
F_{x_{h}}(u)$ in \eqref{eq_predictive_cdf}, employing quadrature methods (trapezoid), see \cite{Press_2007},
Chapters 4 and 13.

Table \ref{table_0} reports values of $\VaR_{2,p}$
using the reference values from Table 11.4 in \cite{Linton2019}.
The chosen algorithm in \eqref{eq_QNewton} was implemented in \textsc{Matlab}, using a tolerance value of $ 10^{-7}$ and avoiding to divide by $f$ when this is smaller than $10^{-14}$. Initial values of the iterations were chosen equal to the corresponding Gaussian quantiles.
Values of $F_{x_{h}}(u)$ were computed as in \eqref{eq_predictive_cdf} and $f_{x_{h}}(u)$ as in
\eqref{eq_predictive_Gaussian}, truncating sums at 100 terms.

Table \ref{table_0}
reports terminal values of the iterations, along with number of iterations and a comparison with the standard Gaussian distribution. Unsurprisingly, the Values at Risk are found to be close to the Gaussian quantiles. However, they are both smaller or larger than the Gaussian, depending on the value of $p$. The number of iterations needed was smaller than 5.

\begin{table}[tbp] \centering%
\caption{Values of $\VaR_{2,p}$ for $\omega=1.14 \cdot 10^{-5}$, $\alpha=0.131007$, $\beta=0.845708$, $\lambda=0$ using iterations \eqref{eq_QNewton}.}\label{table_0}
\begin{tabular}{lcccc}
\hline
$p$ & exact $\VaR_{2,p}$ & \# of iterations & Gaussian & ratio=Gaussian/exact \\
\hline
0.05	&	$1.6415$	&	3	&	$1.6449$	&	1.0020	\\
0.025	&	$1.9635$	&	3	&	$1.9600$	&	0.9982	\\
0.01	&	$2.3443$	&	3	&	$2.3263$	&	0.9924	\\
0.005	&	$2.6092$	&	4	&	$2.5758$	&	0.9872	\\
\hline
\end{tabular}%
\end{table}%

Table \ref{table_02}
reports values of $\ES_{2,p}$
using the standardized \pof\ distribution
with the same parameter values as in Table \ref{table_0}.
Numerical integration as in the last expression in \eqref{eq_ES_0} was performed using the standard function \texttt{integral} in \textsc{Matlab} with standard tolerance values; this uses global adaptive quadrature integration methods. Minus infinity was replaced in the calculations with $-6$.
Values of $F_{x_{h}}(u)$ were computed as in \eqref{eq_predictive_cdf} and $f_{x_{h}}(u)$ as in
\eqref{eq_predictive_Gaussian}, truncating sums at 100 terms.

\begin{table}[tbp] \centering%
\caption{Values of $\ES_{2,p}$ for $\omega=1.14 \cdot 10^{-5}$, $\alpha=0.131007$, $\beta=0.845708$, $\lambda=0$ using \eqref{eq_ES_0} and numerical integration.}\label{table_02}%
\begin{tabular}{lccc}
\hline
$p$ & exact $\ES_{2,p}$& Gaussian & ratio=Gaussian/exact \\
\hline
0.05	&	2.0745	&	2.0627	&	0.9943	\\
0.025	&	2.3620	&	2.3378	&	0.9898	\\
0.01	&	2.7121	&	2.6652	&	0.9827	\\
0.005	&	2.9612	&	2.8919	&	0.9766	\\
\hline
\end{tabular}%
\end{table}%

Table \ref{table_02} shows that the Expected Shortfall values are close to the Gaussian case, but systematically lower than them. In practice, the call to the  \texttt{integral} function was quicker than the computation of the $\VaR_{2,p}$ in the Table \ref{table_0}.

\subsection{Alternatives based on Monte Carlo}
Alternative methods to compute $\VaR_{h,p}$ and $\ES_{h,p}$ rely on MC simulations.
Simple MC solutions are reviewed here for comparison with the exact methods above. In order to estimate $\VaR_{h,p}$
and $\ES_{h,p}$, for replication $j=1,\dots,n$, one could generate pseudo random
numbers $( \varepsilon _{t,j}^{\ast } ) _{t=1}^{h}$ and construct
the corresponding values $( x_{t,j}^{\ast } ) _{t=1}^{h}$ using
recursion \eqref{eq_GARCH11}. Let $x_{h,j}^{\ast }$ be the $j$-th MC\ realization
of $x_{h}$ constructed in this way, \PP{and observe that $x_{h,j}^{\ast }$ are independent realisations across repetitions $j$ from the \pof\ distribution.}

Repeating this for $j=1,\dots ,n$, the sample
$( x_{h,1}^{\ast },\dots ,x_{h,n}^{\ast }) $ can be formed;\
let $ ( q_{1},\dots ,q_{n}) $ indicate the ordered values of $%
( x_{h,1}^{\ast },\dots ,x_{h,n}^{\ast })$, with $q_{1},\leq \dots
\leq q_{n}$. The MC quantile $q_{\lfloor n p\rfloor +1}$ can be used to estimate $-\VaR_{h,p}$, where
$\lfloor x\rfloor$ and $\lceil x\rceil$ indicate the round-down or round-up of $x$ to the nearest integer.\footnote{$\lfloor x \rfloor $ (respectively
$\lceil x\rceil$)
denotes the largest (respectively smallest) integer value less or equal
(respectively greater or equal)
to $x$.}

\begin{table}[tbp] \centering%
\caption{Number of replications $n$ for a MC confidence interval on $\VaR_{2,p}$, see eq. \eqref{eq_length_VaR}, \newline at given MC coverage level $1-\eta$.}\label{table_10}%
\begin{tabular}{lcccc}
\hline
		& $p=0.05$ & 	0.025 & 	0.01 & 	0.005	\\\hline
$\eta=0.05$ 	&	7.0710E+11	&	1.1604E+12	&	2.3680E+12	&	4.2104E+12	\\
$\eta=0.01$	&	1.2213E+12	&	2.0043E+12	&	4.0900E+12	&	7.2722E+12	\\
\hline
\end{tabular}%
\end{table}%

\PP{Observe here that the sample is a (pseudo) i.i.d. sample from the \pof\ distribution, and hence all results for i.i.d. samples apply on it.}
Standard results of quantiles based on the application of the central limit theorem to the MC empirical c.d.f., see e.g. \cite{Dudevicz1988} Theorem 7.4.21, imply that
\begin{equation*}
\sqrt{n}\left( q_{\left\lfloor n p\right\rfloor +1}+\VaR_{h,p}\right) \overset%
{w}{\rightarrow }\rN\left( 0,\frac{p(1-p)}{f_{x_{h}}^{2}(\VaR_{h,p})}\right) ,
\end{equation*}%
where $\overset{w}{\rightarrow }$ indicates weak convergence for $n\rightarrow \infty$.
Hence a MC\ large-$n$\ confidence interval for $\VaR_{h,p}$ using $%
q_{\left\lfloor np\right\rfloor +1}$ at level $\eta $ is given by $%
q_{\left\lfloor np\right\rfloor +1}\pm z_{1-\eta /2}\sqrt{p(1-p)}/(\sqrt{n}%
f_{x_{h}}(q_{\left\lfloor np\right\rfloor +1}))$ where $z_{b}$ is the $b$%
-quantile from the standard normal distribution. 
The length of the confidence interval for $\VaR_{h,p}$ is hence
$\ell _{\VaR}=2z_{1-\eta /2}\sqrt{p(1-p)}/(\sqrt{n}f_{x_{h}}(\VaR_{h,p}))$, which is
linked to the precision of the MC estimate.
Setting $\ell _{\VaR} \leq 10^{-a}$ for some integer $a$, this equation can be
solved for $R,$ giving
\begin{equation}
n\geq \left\lceil \frac{4z_{1-\eta /2}^{2}p(1-p)10^{2a}}{f_{x_{h}}^{2}(\VaR%
_{h,p})}\right\rceil .  \label{eq_length_VaR}
\end{equation}

Similarly, consider the MC estimation of $\ES_{h,p}$ for given $\VaR_{h,p}$.
Assuming $\VaR_{h,p}$ known here simplifies derivations without altering the main discussion of MC uncertainty; see \cite{Patton2019} for the joint estimation of $\VaR_{h,p}$ and $\ES_{h,p}$.
The Expected Shortfall could be estimated  by
\begin{equation*}
m_{h,p}=\frac{1}{p n}\sum_{j=1}^{n}-x_{h,r}^{\ast }1(x_{h,j}^{\ast }\leq \VaR_{h,p})=:\frac{1}{pn}\sum_{j=1}^{n}v_{h,j}
\end{equation*}%
with $v_{h,j}:=-x_{h,j}^{\ast }1(x_{h,j}^{\ast }\leq \VaR_{h,p})$.
Observe that this MC estimator is consistent when $\ES_{h,p}$ exists, which is the case thanks to the results in Theorem \ref{theorem_1}.

Let further $V_{h}^{2}:=\E ( v_{h,j}^{2} ) -\E ( v_{h,j}) ^{2}$ where
\begin{equation}\label{eq_mom_v}
\E(v_{h,j}^n)=\int_{-\infty }^{-\VaR_{h,p}}(-u)^n~f_{x_{h}}( u) \mathrm{d}u, \qquad n=1,2.
\end{equation}%
Observe that these expectations exist thanks to the results in Theorem \ref{theorem_1}. Further, note that
$v_{h,r}/p$ has expectation $\ES_{h,p}$ and variance $V_{h}^{2}/p^{2}$, and hence $\E(v_{h,r})=p\ES_{h,p}$.

Application of the central limit theorem, see e.g. \cite{Dudevicz1988} Theorem 6.3.2.,  to $m_{h,p}$ implies that
\begin{equation*}
\sqrt{n}\left( m_{h,p}-\ES_{h,p}\right) \overset{w}{\rightarrow }\rN\left( 0,%
\frac{V_{h}^{2}}{p^{2}}\right) . 
\end{equation*}%
Thus a MC large-$n$ confidence interval for $\ES_{h,p}$ using $m_{h,p}$ at level $\eta $ is given by $m_{h,p}\pm z_{1-\eta /2}V_{h}/(p\sqrt{n})$. The length (precision) of the confidence interval for $\ES_{h,p}$ is hence $%
\ell _{\ES}=2z_{1-\eta /2}V_{h}/(p\sqrt{n}).$ Setting $\ell _{\ES} \leq
10^{-a}$ for some integer $a$, this equation can be solved for $n$, giving
\begin{equation}
n\geq \left\lceil \frac{4z_{1-\eta /2}^{2}V_{h}^{2}10^{2a}}{p^{2}}%
\right\rceil .  \label{eq_lenghtCI_ES}
\end{equation}

Values of $n$ from \eqref{eq_length_VaR}
are reported in Table \ref{table_10} for the selected precision level $a=5$ and $h=2$, using the values of $\alpha$ and $\beta$ from Table \ref{table_0} and with reference to the standardized variate.
In Table \ref{table_10}, $f_{x_{h}}(\VaR_{h,p})$ in \eqref{eq_length_VaR} is computed using the exact formula \eqref{eq_predictive_Gaussian}.

From Table \ref{table_10} one deduces that a large number of replications $n$ is required to compute a confidence interval at level $1-\eta $ for $\VaR_{h,p}$
for given $a$. Note that the values of $n$ are large also because of the factor $f_{x_{h}}^{2}(\VaR_{h,p})$ and
$p^2$ in the denominators of \eqref{eq_length_VaR} and \eqref{eq_lenghtCI_ES}, respectively.

Values of $n$ from
\eqref{eq_lenghtCI_ES}
are reported in Table \ref{table_11} for the selected precision level $a=5$ and $h=2$, using the values of $\alpha$ and $\beta$ from Table \ref{table_0}, and with reference to the standardized variate.
In Table \ref{table_11}, $V_{h}^2$ in \eqref{eq_lenghtCI_ES} is evaluated using numerical integration in \eqref{eq_mom_v} for $n=2$ with $f_{x_{h}}(\cdot)$ computed as in \eqref{eq_length_VaR}.
Also from Table \ref{table_11} one deduces that a large number of replications $R$ is required to compute a confidence interval at level $1-\eta $ for $\ES_{h,p}$.

\begin{table}[tbp] \centering%
\caption{Number of replications $n$ for a MC confidence interval on $\VaR_{2,p}$, see eq. \eqref{eq_lenghtCI_ES}, \newline at given MC coverage level $1-\eta$.}\label{table_11}%
\begin{tabular}{lcccc}
\hline
	&	$p=0.05$	&	0.025	&	0.01	&	0.005	\\
\hline									
$\eta=0.05$	&	5.0484E+12	&	1.2813E+13	&	4.0575E+13	&	9.3989E+13	\\
$\eta=0.01$	&	8.7196E+12	&	2.2131E+13	&	7.0081E+13	&	1.6234E+14	\\
\hline
\end{tabular}%
\end{table}%

More importantly, 
because of the nature of confidence intervals,
there is probability $\eta $ that each of $\VaR_{h,p}$ or $\ES_{h,p}$ does not fall
within its MC confidence interval.
Decreasing $\eta $ does not
offer a solution to this problem, because the quantile $z_{1-\eta /2}$ of
the standard normal distribution would diverge.

One hence concludes that the MC estimation of $\VaR_{h,p}$ or $\ES_{h,p}$ is costly
in terms of number of replications $n$, and it does not guarantee any given
level of numerical precision $a$, because of the probability $\eta $ of $\VaR_{h,p}$ or $\ES_{h,p}$ to fall outside its MC confidence interval. This is in contrast with the ease and
precision of the exact formulae \eqref{eq_predictive_Gaussian} and \eqref{eq_predictive_cdf} provided in this paper.

Similar consideration apply the to direct nonparametric estimation of the \pof\ density.

\section{Uncertainty regions for \pof\ functionals}\label{sec_forecast}
\color{\mycol}This section discusses how uncertainty regions can be constructed for \pof\ functionals to reflect estimation uncertainty, making use of the explicit formulae in the paper.

Let $\vtheta=(\omega, \alpha, \beta, \lambda)'$ indicate the parameters of the GARCH(1,1) in eq. \eqref{eq_GARCH11}, and assume that the model has been estimated on a sample of data $\{x_t\}_{t=-T+1}^0$ by Quasi Maximum Likelihood (QML). Note that the estimation sample includes $T>0$ observations indexed by negative values of $t$. Let $\widehat{\vtheta}$ be the corresponding QML and
$\vtheta _ 0$ the (pseudo)-true values.

Under appropriate regularity conditions,
see
\citet{Lee1994}, \citet{Jensen2004} and \citet{Arvanitis2017} and references therein,
one has results of the type
$T^{\frac{1}{2}}\mR'(\widehat{\vtheta}-\vtheta_0) \overset{w}{\rightarrow } \rN( \vzeros, \mOmega_{\mR})$, where $\overset{w}{\rightarrow } $ indicates convergence in distribution as $T \rightarrow \infty$, and
$\mR$ indicates a full-column-rank matrix with $r$ columns.
This allows to construct asymptotic confidence regions of the type
\begin{equation}\label{eq_CI}
A_\eta=\{ \mR'\vtheta: (\widehat{\vtheta}-\vtheta)'\mR \mOmega_{\mR}^{-1}\mR'(\widehat{\vtheta}-\vtheta)\leq c_\eta\}
\end{equation}
where $\Pr (w\leq c_\eta)=1-\eta$ and $w \sim \chi^2( r)$, and $\mR$ is a full column rank matrix with $r$ columns.

This region has the property that $\Pr (\mR'\vtheta_0 \in A_\eta) \rightarrow 1-\eta$. Note that in \eqref{eq_CI} $\mOmega_{\mR}$ can be replaced by a consistent estimator. A special case of this is when $\mR$ is chosen equal to the identity $\mI$; in this case \eqref{eq_CI} gives the confidence ellipsoid for the unrestricted vector $\vtheta$; this is default case in the following.

Let $\vvarphi$ be a (multivariate) functional of interest, such as the $\VaR$ or the $\ES$, or both, which depends on $\vtheta$, $\vvarphi= \vvarphi (\vtheta)$. Define also the set of values $B_\eta$ taken by the $\vvarphi$ map for any value of $\vtheta$ in $A_\eta$, i.e.
\begin{equation}\label{eq_CI2}
B_\eta =\vvarphi (A_\eta) = \{ \vvarphi (\vtheta), \vtheta \in A_\eta\}.
\end{equation}
Then the following proposition shows that $B_\eta$ is an uncertainty region for $\vvarphi$ with at least asymptotic coverage equal to $1-\eta$.
\begin{prop}[Uncertainty region]
$B_\eta$ is a uncertainty region for $\vvarphi$ with at least asymptotic coverage equal to $1-\eta$, i.e. $\Pr (\vvarphi(\vtheta_0) \in B_\eta) \rightarrow \gamma \geq 1-\eta$.
\end{prop}
\begin{proof}
 See \citet{Fanelli2010} Proposition 1.
\end{proof}

In practice, one needs to compute the set $\vvarphi(A_\eta)$. Assume for simplicity that $\vvarphi(A_\eta)$ is univariate, indicated here as $\varphi(A_\eta)$. An uncertainty interval would be $(\varphi_1, \varphi_2)$ where
$\varphi_1 = \inf_{\vtheta \in A_\eta}\{\varphi(\vtheta)\}$ and
$\varphi_2 = \sup_{\vtheta \in A_\eta}\{\varphi(\vtheta)\}$.

One way to approximate the interval $(\varphi_1, \varphi_2)$ is to calculate the extremes of $\varphi(\theta)$ for a grid of points $\theta$ in $A_\eta$. Let $\mathcal{A}_\eta \subset A_\eta$ be this grid of points; one can then calculate
$(\varphi_1^{\star}, \varphi_2^{\star})$ as an approximation to $(\varphi_1, \varphi_2)$ where
$\varphi_1^{\star} = \min_{\vtheta \in \mathcal{A}_\eta}\{\varphi(\vtheta)\}$ and
$\varphi_2^{\star} = \max_{\vtheta \in \mathcal{A}_\eta}\{\varphi(\vtheta)\}$.
Appendix \ref{sec_grid} illustrates how to construct a grid of points in $A_\eta$.

Two cases were considered for illustration. The first case, labelled `Microsoft stock returns', corresponds to a GARCH (1,1,) estimated on daily log returns of the Microsoft stock price, over the period 2010-12-08 to 2018-11-15, for a total of 2000 observations. The GARCH(1,1) ML estimates were
$\hat{\omega}= 0.048977 (0.0049464)$,
$\hat{\alpha} = 0.078824 (0.0075383)$,
$\hat{\beta}= 0.88389  (0.0092256)$, with estimated standard errors in parenthesis. The estimated asymptotic variance covariance was saved and used to compute the estimation uncertainty region for $\VaR_{2,p}$ and $\ES_{2,p}$.
Table \ref{table_grid} reports the results.

The second case, labelled `One simulation run', corresponds to the simulation of 1000 data points from a GARCH(1,1) with
$\omega = 0.02$
$\alpha = 0.1$ and
$\beta = 0.8$.
The resulting ML estimates were
$\hat{\omega} = 	0.035234	(	0.0128	)$,
$\hat{\alpha} =	0.13336	(	0.0896	)$,
$\hat{\beta} = 	0.68463	(	0.0334	)$,
with standard errors in parenthesis.
The estimated asymptotic variance covariance was saved and used  to compute the estimation uncertainty intervals for $\VaR_{2,p}$ and $\ES_{2,p}$. Table \ref{table_grid} reports the results.

For both cases in Table \ref{table_grid}, 200 points were used in the grid, half of which were selected as image of points $\vtheta$ for which the inequality in \eqref{eq_CI} is valid as an equality, i.e. points on the surface of the confidence ellipsoid. The last column in Table \ref{table_grid} reports how many of the extremes in each row were found corresponding to $\vtheta$ values on the surface. It can be seen that many of these extremes come from points on the surface, but not all.
Increasing the number of points in the grid to 2000 gave marginal improvements for the extremes.\footnote{The extremes varied for less that $4.1 \cdot 10^{-5}$ for the Microsoft case and for less than $3.5 \cdot 10^{-4}$ for the One simulation run. For 2000 points, 4 out of 4 (respectively 3 out of 4) of the extremes came from points on the surface for the Microsoft case (respectively for the One simulation run).}
More details on the computations behind Table \ref{table_grid} are reported in Appendix \ref{sec_grid}.

\begin{table}[tbp] \centering%
\caption{Estimation-uncertainty regions of $\VaR_{2,0.01}$ and $\ES_{2,0.01}$. \newline Grid over 200 points, half of which derived from points on the surface of the estimation confidence ellipsoid.}\label{table_grid}
\begin{tabular}{lcccccccc}
\hline
& \multicolumn{2}{c}{interval for $\VaR_{2,0.01}$} &  &  & \multicolumn{2}{c}{
interval for $\ES_{2,0.01}$} &  &number of extremes  \\ \cline{2-7}
 & min & max &  &  & min & max & &derived from surface points\\ \hline
Microsoft stock returns & $-2.3383$ & $-2.3298$ &  &  & 2.6733 & 2.6957 && 4
out of 4 \\
One simulation run & $-2.4197$ & $-2.3263$ &  &  & 2.6652 & 2.9144 && 2 out
of 4 \\ \hline
\end{tabular}%
\end{table}%

One could ask whether analogues to this procedure exist which use MC in place of the exact formulae, where each map $\vvarphi(\theta)$ is replaced by MC simulation plus MC estimation of $\vvarphi(\cdot)$. The MC approach implies a large computational burden, because of the added MC simulation and estimation burden associated with the estimation of $\vvarphi(\cdot)$ map. Moreover, the inherent limitations associated with MC confidence
interval discusses in Section \ref{sec_appl} would apply here, which would add extra uncertainty for the estimation of $\vvarphi(\cdot)$. 
This additional layer of MC uncertainty is completely avoided by the present exact methods.

In other words, the uncertainty regions produced via the present exact methods only reflect in-sample estimation uncertainty associated with the GARCH parameter, but not the MC simulation and estimation uncertainty of the $\vvarphi(\cdot)$ map.


\color{black}

\section{Conclusions}
\label{sec_conc}
This paper presents the analytical form of the \pof\ density of a
GARCH(1,1) process. This can be used to evaluate the probability of tail events
or of quantities that may be of interest for value at risk calculations.
The exact formulae improve on approximation methods based on moments, or on Monte Carlo simulation and estimation.

The exact formuale show that, while the \pof\ density can be very far from normal, for common parameter values often encountered in applications, the discrepancy of the \pof\ density from the Gaussian distribution \PP{can be} small. These results could not be obtained without the explicit form of the \pof\ density.

\PP{The present exact results are shown to imply easy-to-compute uncertainty regions for risk functionals, so as to reflect estimation uncertainty. These tools are not available for alternatives based on approximations or MC simulations and estimation of functionals.}

The techniques in this paper can be \PP{extended to the case} of symmetric innovations density $g(\cdot)$ different from the N(0,1) one. Different densities imply distinct subsequent (negative) binomial expansions of expression (\ref{eq_sigma2_h}) for $\sigma _{t}^{2}$, and different auxiliary convergence conditions on the GARCH coefficients, similarly to Assumption \ref{ass_1}. These extensions are left to future research.




\bibliography{ects}
\bibliographystyle{Chicago_pp}

\appendix
\section{Proofs}
\label{sec_app_proofs}
The proofs of the Theorems are based on several Lemmas, which are reported first.

\begin{lemma}[Limits of $\Psi$]\label{lemma-Psi}
$\Psi (a,c;z)\rightarrow 0$ for $c\rightarrow -\infty $ for real and
positive $a$ and $z$ and \PP{$\Psi (a,c;z)\rightarrow 0$ for $a\rightarrow
\infty $ for real and positive $c$ and $z$}.
\end{lemma}

\begin{proof}

The proof uses the Lebesgue dominated convergence theorem,
see e.g. Theorem 10.27 in \cite{Apostol1981}.
Consider the integral representation \eqref{eq:tricomi} of $\Psi (a,c;z)$
for real and positive $a$ and $z$.
Note that for negative $c$ and $t\geq 0$ one has
\begin{equation*}
k_{n}(t):=\PP{\frac{1}{\Gamma (a)}}e^{-zt}t^{a-1}\left( 1+t\right) ^{c-a-1}\leq
\frac{1}{\Gamma (a)}e^{-zt}t^{a-1}=:r(t),
\end{equation*}%
where $k_{n}(t)$, $r(t)>0$, $\int_{\mathbb{R}_{+}}k_{n}(t)\mathrm{d}t=\Psi
(a,c;z)$ and $\int_{\mathbb{R}_{+}}r(t)\mathrm{d}t=\frac{1}{\Gamma (a)}\int_{%
\mathbb{R}_{+}}e^{-zt}t^{a-1}\mathrm{d}t=\frac{1}{\Gamma (a)}z^{-a}\Gamma
(a)=z^{-a}$;\ this shows that $k_{n}(t)$ is dominated by the function $r(t)$%
, which is Lebesgue-integrable on $\mathbb{R}_{+}$. The notation $k_{n}(t)$
is chosen here to indicate that a sequence of values $a_{n}$ or $c_{n}$ will
be constructed.

Next observe that for any $t>0$, and for $c_{n}\rightarrow -\infty $, one
has $k_{n}(t):=e^{-zt}t^{a-1}\left( 1+t\right) ^{c_{n}-a-1}\rightarrow 0$.
Hence $k_{n}(t)$ converges to the zero function $k(t):=0$ on the whole $%
\mathbb{R}_{+}$, except for the point $t=0$. By the dominated convergence
theorem, $\lim_{c_{n}\rightarrow -\infty }\Psi
(a,c_n;z)=\lim_{c_{n}\rightarrow -\infty }\int_{\mathbb{R}_{+}}k_{n}(t)\mathrm{%
d}t=\int_{\mathbb{R}_{+}}k(t)\mathrm{d}t=0$. This proves that $\Psi
(a,c;z)\rightarrow 0$ for $c\rightarrow -\infty $ for real and positive $a$
and $z$.

\PP{
Let now $a_{n}\rightarrow \infty $ and observe that for any $t>0$, $\frac{t}{%
1+t}<1$ and $\Gamma (a_n)\rightarrow \infty $, and hence
\begin{equation*}
k_{n}(t):=\frac{1}{\Gamma (a_n)}e^{-zt}t^{a_n-1}\left( 1+t\right) ^{c-a_n-1}=\frac{1}{%
\Gamma (a_n)}e^{-zt}(1+t)^{c-2}\left( \frac{t}{1+t}\right) ^{a_n-1}\rightarrow 0.
\end{equation*}%
Hence $k_{n}(t)$ converges to the zero function $k(t):=0$ on the whole $%
\mathbb{R}_{+}$. By the dominated convergence theorem, $\lim_{a_{n}%
\rightarrow -\infty }\Psi (a_n,c;z)=\lim_{a_{n}\rightarrow -\infty }\int_{%
\mathbb{R}_{+}}k_{n}(t)\mathrm{d}t=\int_{\mathbb{R}_{+}}k(t)\mathrm{d}t=0$.
This proves that $\Psi (a,c;z)\rightarrow 0$ for $a\rightarrow \infty $ for
real and positive $c$ and $z$.}
\end{proof}

\begin{proof}[Proof of Lemma \ref{lemma01}]
Consider the
transformation theorem for $z_{h}=x_{h}^{2}$;\ from standard results, see
e.g. \citet{Mood1974}, page 201, Example 19, one has
\begin{equation}\label{eq_trans2}
f_{z_{h}}(w_{h})=\left( \frac{1}{2}\frac{1}{\sqrt{w_{h}}}f_{x_{h}}(-\sqrt{%
w_{h}})+\frac{1}{2}\frac{1}{\sqrt{w_{h}}}f_{x_{h}}(\sqrt{w_{h}})\right)
1_{w_{h}\geq 0}.
\end{equation}%
where $1_{A}$ is the indicator function of the event $A$.
Because, by symmetry, one has $%
f_{x_{h}}(-\sqrt{w_{h}})=f_{x_{h}}(\sqrt{w_{h}})$, \PP{\eqref{eq_trans2}}
simplifies to
$
f_{z_{h}}(w_{h})=w_{h}^{-\frac{1}{2}}f_{x_{h}}(\sqrt{w_{h}})1_{(w_{h}\geq 0)},
$
or, letting $u_{h}$ indicate $w_{h}^{\frac{1}{2}}$, and solving for $%
f_{x_{h}}(u_{h})$, one finds $f_{x_{h}}(u_{h})=\left\vert u_{h}\right\vert
f_{z_{h}}(u_{h}^{2})$, which is \eqref{eq_predictive_Gauss_xt}. Note that
the expression with the absolute value is also valid for $u_{h}=-\sqrt{w_{h}}
$. This proves \eqref{eq_predictive_Gauss_xt}.

\PP{One has by assumption that $f_{\varepsilon }(\epsilon ):=g(\epsilon
^{2}):=(2\pi )^{-\frac{1}{2}}\exp (-\epsilon ^{2}/2)$. Hence, simple
applications of the transformation theorem cited above imply $%
f_{x_{t}|x_{1},\dots ,x_{t-1}}(u|u_{1},\dots u_{t-1})=(2\pi \sigma
_{t}^{2})^{-\frac{1}{2}}\exp \left( -\frac{1}{2}\frac{u^{2}}{\sigma _{t}^{2}}%
\right) =(\sigma _{t}^{2})^{-\frac{1}{2}}g\left( \frac{u^{2}}{\sigma _{t}^{2}%
}\right) $ and $f_{z_{t}|x_{1},\dots ,x_{t-1}}(w|u_{1},\dots u_{t-1})=(w)^{-%
\frac{1}{2}}f_{x_{t}|x_{1},\dots ,x_{t-1}}(w^{\frac{1}{2}}|u_{1},\dots
u_{t-1})1_{w\geq 0}=(w\sigma _{t}^{2})^{-\frac{1}{2}}g\left( \frac{w}{\sigma
_{t}^{2}}\right) $, from which Eq. \eqref{eq_decompose} follows.} 
\end{proof}

\begin{proof}[Proof of Lemma \ref{Lemma_2}]
Consider $f_{\vz,z_{h} |\vvarsigma} (\vw,w_{h}|\vs)$ from \eqref{eq_decompose}, and consider the transformation of from $\vz$ to $\vy$. Observe that the domain of integration
remains $\mathbb{R}_{+}^{h-1}$, that the inverse transformation is $%
z_{t}=\beta \sigma _{t}^{2}y_{t}/\alpha _{t}$, with Jacobian $%
\gamma _{h}\prod_{t=1}^{h-1}\sigma _{t}^{2}$, where $\gamma _{h}:=\beta
^{h-1}/(\prod_{t=1}^{h-1}\alpha _{t})$. Hence one finds
\begin{equation*}
f_{\vy, z_h|\vvarsigma}\left( \vv, w_h|\vs\right) =\gamma _{h}^{\frac{1}{2}%
}\prod_{t=1}^{h-1}\left(
v_{t}^{-\frac{1}{2}}
g\left( \frac{\beta }{\alpha
_{t}}v_{t}\right) \right) \cdot \left( w_{h}\sigma _{h}^{2}\right) ^{-\frac{1%
}{2}}g\left( \frac{w_{h}}{\sigma _{h}^{2}}\right)
\end{equation*}%
from which \eqref{eq_last_I} follows, as in \eqref{eq_integral_I}.
\end{proof}

%


\begin{lemma}[Conditions on $\protect\beta $]
\label{Lemma_beta_cond} Assumption $\ref{ass_1}$ ensures that for any $j\geq 2$
\begin{equation}
\omega \left( 1-\sum_{i=1}^{j-1}\beta ^{i}\right) \leq \beta ^{j}\sigma
_{1}^{2}, \label{eq_inductive0}
\end{equation}
\PP{which implies that in \eqref{eq_vola} one has
\begin{equation}\label{eq_ineq0}\omega \leq \left( 1+v_{h-1}\right) \beta \sigma _{h-1}^{2}.
\end{equation}}
\end{lemma}
\begin{proof} For $j=2$ the inequality (\ref{eq_inductive0}%
) reads $\beta ^{2}\sigma _{1}^{2}+\omega \beta -\omega \geq 0$.
Solving the quadratic on the l.h.s. for $\beta $ one finds two roots, $\beta
_{1}=(-\omega -\sqrt{\omega ^{2}+4\omega \sigma _{1}^{2}})/(2\sigma
_{1}^{2})<0$ and $\myb=(-\omega +\sqrt{\omega ^{2}+4\omega \sigma
_{1}^{2}})/(2\sigma _{1}^{2})>0$, so that the quadratic is non-negative for $%
\beta \leq \beta _{1}$ or for $\beta >\myb$. Because $\beta _{1}<0$ is not possible, this
holds only when $\beta \geq \myb$. This proves that (\ref%
{eq_inductive0}) is valid for $j=2$ for $\beta \geq \myb$ and a
fortiori also for $\beta \geq \max \{\frac{1}{2},\myb\}$.

An induction approach is used for $j>2$. Assume that (\ref{eq_inductive0})
is valid for some $j=j_{0}\geq 2$ and $\beta \geq \max \{\frac{1}{2},\myb \}$; it can then be shown that (\ref{eq_inductive0}) is valid also
replacing $j$ with $j+1$. To see this, take (\ref{eq_inductive0}) for $%
j=j_{0}$ and multiply by $\beta $. One finds
\begin{equation*}
\omega \left( \beta -\sum_{i=1}^{j_{0}-1}\beta ^{i+1}\right) \leq \beta
^{j_{0}+1}\sigma _{1}^{2}.
\end{equation*}%
Because $\beta \geq \frac{1}{2}$, one has $\omega (1-\beta )\leq \omega
\beta $, so that,
\begin{equation*}
\omega \left( 1-\beta -\sum_{i=1}^{j_{0}-1}\beta ^{i+1}\right) \leq \omega
\left( \beta -\sum_{i=1}^{j_{0}-1}\beta ^{i+1}\right) \leq \beta
^{j_{0}+1}\sigma _{1}^{2}.
\end{equation*}%
Rearranging $1-\beta -\sum_{i=1}^{j_{0}-1}\beta ^{i+1}$ as $%
1-\sum_{i=1}^{j_{0}}\beta ^{i}$, one finds that
\eqref{eq_inductive0}
holds
also for $j=j_{0}+1$. The induction step hence proves that (\ref%
{eq_inductive0}) holds for any $j$ if $\beta \geq \max \{\frac{1}{2},\myb\}$.

\PP{To show \eqref{eq_ineq0}, observe that the minimum value for $\beta \sigma _{h-1}^{2}$
corresponds to $v_{h-2}=\dots =v_{1}=0$, which equals $\omega
\sum_{i=1}^{j}\beta ^{i}+\beta ^{h-1}\sigma _{1}^{2}$. The last expression is
greater than $\omega $ by \eqref{eq_inductive0}, and hence $\omega \leq \beta \sigma _{h-1}^{2}\leq
\left( 1+v_{h-1}\right) \beta \sigma _{h-1}^{2}$. }
\end{proof}

\PP{\begin{lemma}[Binomial expansion]\label{Lemma_exp}
Under assumption $\ref{ass_1}$, the following expansion holds for any $r$
\begin{align}\nonumber
& (\sigma _{h}^{2})^{r} =\sum_{k_{1}=0}^{r}\sum_{k_{2}=0}^{r-k_{1}}\cdots
\sum_{k_{h-2}=0}^{r-K_{h-3}}\binom{r}{k_{1}}\binom{%
r-k_{1}}{k_{2}}\cdots \binom{r-K_{h-3}}{k_{h-2}}\omega ^{K_{h-2}}\beta
^{(h-2)r-U_{h-2}}\cdot \\
& \qquad\qquad\qquad\qquad \cdot
\prod_{t=1}^{h-2}\left( 1+v_{h-t}\right)
^{r-K_{t}}(\omega +(1+v_{1})\beta \sigma _{1}^{2})^{r-K_{h-2}}.\label{eq_sig2r_expand}
\end{align}
where $K_t:=\sum_{i=1}^{t}k_i$, $U_{j}:=\sum_{i=1}^{j}K_{i}=\sum_{i=1}^{j} (j-i+1) k_{i}$, and
the sums
$\sum_{k_{1}=0}^{r}\sum_{k_{2}=0}^{r-k_{1}}\cdots
\sum_{k_{h-2}=0}^{r-K_{h-3}}$ extend to $\infty$ if $r\notin \mathbb{N}$.
\end{lemma}
\begin{proof}
Under Assumption $\ref{ass_1}$, Lemma \ref{Lemma_beta_cond} implies that one can employ binomial expansions of $(\sigma
_{h}^{2})^{r}$ where $\sigma _{h}^{2}=\omega +\left( 1+v_{h-1}\right) \beta
\sigma _{h-1}^{2}$ using increasing powers of $\omega $ and decreasing
powers of $\left( 1+v_{h-1}\right) \beta \sigma _{h-1}^{2}$. Hence, setting $%
U_{j}:=\sum_{i=1}^{j}(r-K_{i})$,
\begin{align*}
& (\sigma _{h}^{2})^{r}  = \sum_{k_{1}=0}^{r}\binom{r}{k_{1}}\omega ^{k_{1}}\left(
1+v_{h-1}\right) ^{r-k_{1}}\beta ^{r-k_{1}}\left( \sigma _{h-1}^{2}\right)
^{r-k_{1}}  = \\
& =\sum_{k_{1}=0}^{r}\sum_{k_{2}=0}^{r-k_{1}}\binom{r}{k_{1}}\binom{r-k_{1}}{%
k_{2}}\omega ^{k_{1}+k_{2}}\left( 1+v_{h-1}\right) ^{r-k_{1}}\beta
^{r-k_{1}+r-k_{1}-k_{2}}\left( 1+v_{h-2}\right) ^{r-k_{1}-k_{2}}\left(
\sigma _{h-2}^{2}\right) ^{r-k_{1}-k_{2}} \\
& =\dots  \\
& =\sum_{k_{1}=0}^{r}\sum_{k_{2}=0}^{r-k_{1}}\cdots
\sum_{k_{h-2}=0}^{r-K_{h-3}}\binom{r}{k_{1}}\binom{r-k_{1}}{k_{2}}\cdots
\binom{r-K_{h-3}}{k_{h-2}}\omega ^{K_{h-2}}\beta ^{(h-2)r-U_{h-2}}\cdot \\
& \qquad\qquad\qquad\qquad\qquad \cdot \prod_{t=1}^{h-2}\left( 1+v_{h-t}\right)
^{r-K_{t}}\left( \omega +(1+v_{1})\beta \sigma _{1}^{2}\right) ^{r-K_{h-2}}
\end{align*}
This proves the claim.
\end{proof}
}
\PP{
\begin{lemma}[Integrals]\label{Lemma_psi_int}
One has
\begin{align}\label{eq_psi_int}
& H_{n}(c,z):=\int_{\mathbb{R}_{+}}\exp \left( -zv\right) \left( 1+(1+v)c\right) ^{n }
v^{-\frac{1}{2}}\mathrm{d}v=\left( c+1\right) ^{n +\frac{1}{2}}c^{-\frac{1%
}{2}}\pi ^{\frac{1}{2}}\Psi \left( \frac{1}{2},n +\frac{3}{2};\frac{c+1}{c%
}z\right) \\
& \myG_{n}(z):=\int_{0}^{\infty}\exp  (-z v) (1+v)^n v^{-\frac{1}{2}} \mathrm{d}v =
\pi^{\frac{1}{2}} \Psi \left(\frac{1}{2}, n+\frac{3}{2};z\right) \label{eq:Bt}
\end{align}
\end{lemma}
}
\PP{
\begin{proof}
Set $b:=c+1$ and $t:=mv\ $with $m:=c/(c+1)$ so that $1+c+cv\ =b(1+t)$. Note
that $m^{-1}\mathrm{d}t=\mathrm{d}v$, so that
\begin{align*}
& \int_{\mathbb{R}_{+}}\left( 1+(1+v)c\right) ^{n }\exp \left( -zv\right)
v^{-\frac{1}{2}}\mathrm{d}v =b^{n }\int_{\mathbb{R}_{+}}(1+t)^{n
}\exp \left( -\frac{z}{m}t\right) \left( m^{-1}t\right) ^{-\frac{1}{2}}m^{-1}%
\mathrm{d}t \\
&=b^{n }m^{-\frac{1}{2}}\int_{\mathbb{R}_{+}}(1+t)^{n }\exp \left( -%
\frac{z}{m}t\right) t^{-\frac{1}{2}}\mathrm{d}t=\left( c+1\right) ^{n +%
\frac{1}{2}}c^{-\frac{1}{2}}\pi ^{\frac{1}{2}}\Psi \left( \frac{1}{2},n +%
\frac{3}{2};\frac{z}{m}\right) ,
\end{align*}%
see \eqref{eq:tricomi}. The case of \eqref{eq:Bt} is obtained as the set of the last 2 equalities setting $m=b=1$.
\end{proof}
}

\PP{
\begin{lemma}[Coefficients $A_h(\cdot)$]
\label{Lemma_cj1} Let
\begin{equation}
A_{h}(r):= (\omega + \beta \sigma_1^2 )^{-r-\frac{1}{2}} \int_{\mathbb{R}_{+}^{h-1}}\exp \left( -\frac{1}{2}%
\sum_{t=1}^{h-1}\frac{\beta }{\alpha _{t}}v_{t}\right) (\sigma _{h}^{2})^{r}\prod_{t=1}^{h-1}\frac{\mathrm{d}v_{t}}{\sqrt{v_{t}}};
\label{eq_int_repr_cj}
\end{equation}%
then
\begin{equation}\label{eq_A2r}
A_{2}(r) =\left( \beta \sigma _{1}^{2}\right) ^{-\frac{1}{2}}
\pi ^\frac{1}{2}\Psi \left( \frac{1}{2},r+\frac{3}{2};\frac{\omega+\beta\sigma_1^{2}}{2\alpha_1\sigma_1^{2}}\right),
\end{equation}
and assuming that $\eqref{eq_inductive0}$ holds for $2\leq j\leq
h$, one has for $h\geq3$
\begin{align}\nonumber
A_{h}(r) &=\left( \beta \sigma _{1}^{2}\right) ^{-\frac{1}{2}}\pi ^\frac{h-1}{2}  \sum_{k_{1},\dots,k_{h-2}}\binom{r}{k_{1}}\binom{r-k_{1}}{k_{2}}\cdots
\binom{r-K_{h-3}}{k_{h-2}}\omega ^{K_{h-2}}\beta ^{(h-2)r-U_{h-2}} \cdot \\ &  \cdot \prod_{t=1}^{h-2}
\Psi \left( \frac{1}{2},r-K_{t}+\frac{3}{2};\frac{\beta}{2\alpha_{h-t}}\right)
\Psi \left( \frac{1}{2},r-K_{h-2}+\frac{3}{2};\frac{\omega+\beta\sigma_1^{2}}{2\alpha_1\sigma_1^{2}}\right),
\label{eq_long}
\end{align}
where $K_0:=0$, $K_i:=\sum_{t=1}^{i}k_i$, $U_{j}:=\sum_{i=1}^{j}K_{i}=\sum_{i=1}^{j} (j-i+1) k_{i}$,
$\sum_{k_{1},\dots,k_{h-2}}:=\sum_{k_{1}=0}^{r}\sum_{k_{2}=0}^{r-k_{1}}\cdots
\sum_{k_{h-2}=0}^{r-K_{h-3}}$, and the sums extend to $\infty$ if $r\notin \mathbb{N}$. Note that \eqref{eq_long} reduces to
\eqref{eq_A2r} for $h=2$, because the sum $\sum_{k_{1},\dots,k_{h-2}}$ and the product $\prod_{t=1}^{h-2}$ are empty and $K_{h-2}=K_{0}=0$.
\end{lemma}
}
\PP{
\begin{proof}
Set $h=2$ in \eqref{eq_int_repr_cj} and note that
\begin{equation*}
\int_{\mathbb{R}_{+}}\exp \left( -\frac{\beta }{2\alpha _{1}}
v_{1}\right) \left( \omega +\left( 1+v_{1}\right)
\beta \sigma _{1}^{2}\right) ^{r}v_{1}^{-\frac{1}{2}}\mathrm{d}v_{1}
= \omega ^{r} H_{r}\left(\beta \sigma_1^2 \omega^{-1},\frac{\beta }{2\alpha _{1}}\right)
\end{equation*}
so that by \eqref{eq_psi_int} eq. \eqref{eq_A2r} holds.
Next consider the case $h\geq3$. Under $\eqref{eq_inductive0}$
one can use expansion \eqref{eq_sig2r_expand} in
\eqref{eq_int_repr_cj}. Integrating one finds
\begin{align*}
A_{h}(r) &=\sum_{k_{1},\dots,k_{h-2}}\binom{r}{k_{1}}\binom{r-k_{1}}{k_{2}}\cdots
\binom{r-K_{h-3}}{k_{h-2}}\omega ^{K_{h-2}}\beta ^{(h-2)r-U_{h-2}} \cdot \\ \nonumber &  \qquad \qquad \cdot \prod_{t=1}^{h-2}\myG_{r-K_{t}}\left(\frac{\beta}{2\alpha_{h-t}}\right)\cdot A_{2}(r-K_{h-2}).
\end{align*}
Using \eqref{eq:Bt} and \eqref{eq_A2r}, one finds \eqref{eq_long}.
\end{proof}
}


\begin{proof}[Proof of Theorem $\protect\ref{theorem_1}$]
The integral to be solved is
\begin{equation}
f_{z_{h}|\vvarsigma}(w_{h}|\vs)= \frac{\sqrt{\gamma _{h}/w_{h}}}{\left( 2\pi
\right) ^{\frac{h}{2}}}\int_{\mathbb{R}_{+}^{h-1}}\exp \left( -\frac{1}{2}%
\left( \sum_{t=1}^{h-1}\frac{\beta }{\alpha _{t}}v_{t}+\frac{w_{h}}{\sigma
_{h}^{2}}\right) \right) (\sigma _{h}^2)^{-\frac{1}{2}}\prod_{t=1}^{h-1}\frac{\mathrm{d}%
v_{t}}{\sqrt{v_{t}}}. \label{eq_fstar}
\end{equation}%
Expand $\exp (-w_{h}/(2\sigma _{h}^{2}))=\sum_{j=0}^{\infty }\frac{\left(
-w_{h}/2\right) ^{j}}{j!}\left( \sigma _{h}^{2}\right) ^{-j}$ and note that
\begin{equation*}
f_{z_{h}|\vvarsigma}(w_{h}|\vs)= \frac{\gamma _{h}^{\frac{1}{2}} w_{h}^{-\frac{1}{2}}}{\left( 2\pi \right) ^{%
\frac{h}{2}}}
\sum_{j=0}^{\infty }
\frac{1}{j!} \left( -\frac{w_{h}}{\omega + \beta \sigma_1^2 }\right) ^{j}
A_h\left(-\frac{1}{2}-j\right),
\label{eq:conditional_density}
\end{equation*}%
\PP{where $A_h(\cdot)$ is defined in \eqref{eq_int_repr_cj}. By \eqref{eq_long} one has
$f_{z_{h}|\vvarsigma}(w_{h}|\vs)= w_{h}^{-\frac{1}{2}}\left( 2\pi \right) ^{-\frac{h}{2}}
\sum_{j=0}^{\infty }
\frac{1}{j!} (-\rho_w)^{j}
c_{j,\vs}$
where $\rho_w := w_{h}/ (\omega + \beta \sigma_1^2 ) $ and
$c_{j,\vs}=\gamma _{h}^{\frac{1}{2}}A_h(-\frac{1}{2}-j)$.}
Marginalizing with respect to $\vvarsigma$,
being all elements in $\calS$ equally likely, one finds from \eqref{eq_fz_overall}
\begin{align*}
f_{z_{h}}(w_{h}) &=2^{-h+1}\sum_{\vs\in \calS}f_{z_{h}|\vvarsigma}(w_{h}|\vs%
)=2^{-h+1}\sum_{\vs\in \calS}\sum_{j=0}^{\infty }
\frac{w_{h}^{-\frac{1}{2}}}{( 2\pi )
^{\frac{h}{2}}}
\left(\frac{-w_{h}}{2(\omega+\beta\sigma_1^2)}\right)^{-j}
\frac{c_{j,\vs}}{j!}\\
&=\frac{w_{h}^{-\frac{1}{2}}}{(2\pi ) ^{\frac{h}{2}}}%
\sum_{j=0}^{\infty }
\left(\frac{-w_{h}}{2(\omega+\beta\sigma_1^2)}\right)^{-j}
\frac{c_{j}}{j!}
\end{align*}
where $c_{j}:=2^{-h+1}\sum_{\vs\in \calS}c_{j,\vs}$. Note that if all $c_{j,\vs}$ do not vary with $\vs$, one has
$c_{j}= c_{j,\vs}$.

In order to show that $f_{z_{h}}(w_{h})$ and $f_{x_{h}}(u_{h})$ are
absolutely summable for finite $w_{h}$, $u_{h}$ consider for instance the case $h=2$
for $f_{z_{2}}(w)$. One has
\begin{equation*}
f_{z_{2}|\varsigma _{1}}(w|s_{1})=\frac{1}{2}(\pi \alpha _{1}\sigma
_{1}^{2})^{-\frac{1}{2}}w^{-\frac{1}{2}}\sum_{j=0}^{\infty }\frac{1}{j!}%
\left( -\rho_w \right) ^{j}\Psi \left( \frac{1}{2};1-j;\xi \right) .
\end{equation*}%
where $\rho_w :=\frac{w}{2\left( \omega +\beta \sigma _{1}^{2}\right) }$, $\xi
=\frac{\omega +\beta \sigma _{1}^{2}}{2\alpha _{1}\sigma _{1}^{2}}>0$.
Because $\Psi \left( \frac{1}{2};1-j;\xi \right) $ is non-negative
\PP{and tends to  0 by Lemma \ref{lemma-Psi} for increasing $j$,}
one has $\sup_{-j\in \mathbb{N}}\Psi \left( \frac{1}{2};1-j;\xi \right)
=M<\infty $, so that
\begin{equation*}
\sum_{j=0}^{\infty }\left\vert \frac{1}{j!}\left( -\rho \right) ^{j}\Psi
\left( \frac{1}{2};1-j;\xi \right) \right\vert \leq \sum_{j=0}^{\infty }%
\frac{1}{j!}\rho ^{j}\Psi \left( \frac{1}{2};1-j;\xi \right) \leq
M\sum_{j=0}^{\infty }\frac{1}{j!}\rho ^{j}=M\exp \left( \rho \right)
\end{equation*}%
where $\exp \left( \rho \right) $ is finite for any finite $\rho_w$.
One hence concludes that the series is absolutely convergent for any finite evaluation point $w$.
The same argument applies to $f_{x_{h}}(u_{h})$ for the case $h=2$. The
case for $h>2$ is similar.
\end{proof}


\begin{proof}[Proof of Corollary \protect\ref{Corollary_2}]
The c.d.f.s are found by integrating termwise the p.d.f from 0 to $w_{h}$ or
$u_{h}$ for positive $u_{h}$. Termwise integration is guaranteed by Theorem
10.26 in \cite{Apostol1981}. This delivers the expressions in \eqref{eq_predictive_cdf} for $w_{h}$ and $u_{h}$ for positive $u_{h}$. The symmetry of $f_{x_{h}}(\cdot )$ implies $F_{x_{h}}(0)=\frac{1}{2}$ and $%
F_{x_{h}}(u_{h})=1-F_{x_{h}}(-u_{h})$. Hence for  $0>u_{h}=-a$, say, with $a>0
$,\ one has%
\begin{align*}
F_{x_{h}}(u_{h}) &=1-F_{x_{h}}(a)=\frac{1}{2}-(2\pi )^{-%
\frac{h}{2}}\sum_{j=0}^{\infty }
\frac{1}{j!\left(
2j+1\right) }a^{2j+1}\PP{(-2(\omega+\beta \sigma_1^2))^{-j}}c_{j} \\
&=\frac{1}{2}+(2\pi )^{-\frac{h}{2}}\sum_{j=0}^{\infty }\frac{1}{j!\left( 2j+1\right) }\left( -a\right) ^{2j+1}\PP{(-2(\omega+\beta \sigma_1^2))^{-j}} c_{j}
\end{align*}%
which proves that the expressions in \eqref{eq_predictive_cdf} in valid also
for negative $u_{h}$.


The moments are derived as follows. From \eqref{eq_fstar} one sees
that
\begin{equation*}
\E(z_{h}^{m}|\vvarsigma)= \frac{\sqrt{\gamma _{h}}}{%
\left( 2\pi \right) ^{\frac{h}{2}}}\int_{\mathbb{R}_{+}^{h}}w_{h}^{m-\frac{1%
}{2}}\exp \left( -\frac{1}{2}\frac{w_{h}}{\sigma _{h}^{2}}\right) \mathrm{d}%
w_{h}\prod_{t=1}^{h-1}\exp \left( -\frac{1}{2}\left( \frac{\beta }{\alpha
_{t}}v_{t}\right) \right) \sigma _{h}^{-1}v_{t}^{-\frac{1}{2}}\mathrm{d}%
v_{t}.
\end{equation*}%
Recall that
$\int_{\mathbb{R}_{+}}\exp \left( -\frac{w}{2\sigma _{h}^{2}}\right) w^{m-%
\frac{1}{2}}\mathrm{d}w=\left( 2\sigma _{h}^{2}\right) ^{m+\frac{1}{2}%
}\Gamma \left( m+\frac{1}{2}\right)$
so that
\begin{align*}
\E(z_{h}^{m}|\vvarsigma) & =
2^{m-\frac{h-1}{2}}\pi ^{-\frac{h%
}{2}}\gamma _{h}^{\frac{1}{2}}\Gamma \left( m+\frac{1}{2}\right) \int_{%
\mathbb{R}_{+}^{h-1}}\prod_{t=1}^{h-1}\exp \left( -\frac{1}{2}\left( \frac{%
\beta }{\alpha _{t}}v_{t}\right) \right) \left( \sigma _{h}^{2}\right)
^{m}v_{t}^{-\frac{1}{2}}\mathrm{d}v_{t} \\
& =2^{m-\frac{h-1}{2}}\pi ^{-\frac{h%
}{2}}\Gamma \left( m+\frac{1}{2}\right)
(\omega+\beta \sigma_1^2)^{m + \frac{1}{2}}
\gamma _{h}^{\frac{1}{2}} A_h (m)
\end{align*}%
where $A_h (\cdot)$ is defined in \eqref{eq_int_repr_cj}, which equals \eqref{eq_long} (or \eqref{eq_long_before}).
Hence
\begin{equation*}
\E(z_{h}^{m})=
2^{m-\frac{3}{2}(h-1)}\pi ^{-\frac{h%
}{2}}\Gamma \left( m+\frac{1}{2}\right)
(\omega+\beta \sigma_1^2)^{m + \frac{1}{2}}
\sum_{\vs\in %
\calS}
\gamma _{h}^{\frac{1}{2}} A_h (m)
\end{equation*}
\end{proof}

\begin{proof}[Proof of Theorem \protect\ref{theorem_2}]
\PP{Consider first the case $h=2$ and
\begin{equation}
f_{z_{2}|\vvarsigma}(w_{2}|\vs)=\frac{\gamma _{2}^{\frac{1}{2}}w_{2}^{-\frac{%
1}{2}}}{2\pi }\int_{\mathbb{R}_{+}}\exp \left( -\frac{1}{2}\left( \frac{%
\beta }{\alpha _{1}}v_{1}+\frac{w_{2}}{\sigma _{2}^{2}}\right) \right)
(\sigma _{2}^{2})^{-\frac{1}{2}}\frac{\mathrm{d}v_{1}}{\sqrt{v_{1}}}.
\label{eq_fstar2}
\end{equation}%
As in the proof of Lemma \ref{Lemma_psi_int}, observe that $\sigma _{2}^{2}$ can be written as $%
b(1+s)$ with $b:=\omega +\beta \sigma _{1}^{2}$ and $s:=v_{1}/q$ where $%
q=(\omega +\beta \sigma _{1}^{2})/\beta \sigma _{1}^{2}$, $\sigma
_{2}^{2}=\omega +(1+v_{1})\beta \sigma _{1}^{2}=\omega +\beta \sigma
_{1}^{2}+\beta \sigma _{1}^{2}v_{1}=b(1+s)$. Next define $\rho
:=w_{2}/(2(\omega +\beta \sigma _{1}^{2}))$, and note that $w_{2}/(2\sigma
_{2}^{2})=\rho /(1+s)$;\ observe also that  $\exp \left( -w_{2}/(2\sigma
_{2}^{2})\right) =\exp \left( -\rho /(1+s)\right) =\exp \left( -\rho \right)
\exp \left( \rho s/(1+s)\right) $, where the last term can be expanded as $%
\exp \left( \rho s/(1+s)\right) =\sum_{j=0}^{\infty }\frac{\rho ^{j}}{j!}%
s^{j}(1+s)^{-j}$.
}
\PP{
Substituting these expression in \eqref{eq_fstar2}, using $\gamma _{2}=\beta
/\alpha _{1}$, $q\mathrm{d}s=\mathrm{d}v_{1}$, and setting $z:=$ $\left(
\omega +\beta \sigma _{1}^{2}\right) /\left( 2\alpha _{1}\sigma
_{1}^{2}\right) =\beta q/(2\alpha _{1})$, one finds%
\begin{eqnarray*}
f_{z_{2}|\vvarsigma}(w_{2}|\vs) &=&\left( 2\pi \right) ^{-1}\gamma _{2}^{%
\frac{1}{2}}w_{2}^{-\frac{1}{2}}\e^{-\rho }\sum_{j=0}^{\infty }\frac{\rho
^{j}}{j!}\int_{0}^{\infty }\exp \left( -\frac{\beta q}{2\alpha _{1}}s\right)
s^{j}(1+s)^{-j}(b(1+s))^{-\frac{1}{2}}(qs)^{-\frac{1}{2}}q\mathrm{d}s \\
&=&\left( 2\pi \right) ^{-1}\left( \frac{\beta }{\alpha _{1}}\right) ^{\frac{%
1}{2}}w_{2}^{-\frac{1}{2}}\left( \frac{q}{b}\right) ^{\frac{1}{2}}\e^{-\rho
}\sum_{j=0}^{\infty }\frac{\rho ^{j}}{j!}\int_{0}^{\infty }\exp \left(
-zs\right) ((1+s))^{-\frac{1}{2}-j}(s)^{j-\frac{1}{2}}\mathrm{d}s \\
&=&\left( 2\pi \right) ^{-1}\beta ^{\frac{1}{2}}w_{2}^{-\frac{1}{2}}\left(
\alpha _{1}\beta \sigma _{1}^{2}\right) ^{-\frac{1}{2}}\e^{-\rho
}\sum_{j=0}^{\infty }\frac{\rho ^{j}\Gamma \left( j+\frac{1}{2}\right) }{j!}%
\Psi \left( j+\frac{1}{2},1;z\right) .
\end{eqnarray*}
}

\PP{
By eq. (2) in \cite{Abadir1999} $\Gamma \left( j+\frac{1}{2}\right) =\sqrt{\pi }%
\left( \frac{1}{2}\right) _{j}$, where $(a)_{j}:=\prod_{i=1}^{j-1}(a+i)$
denotes Pochhammer's symbol. Substituting back, noting that $\rho=\rho_w$ and
rearranging, one finds \eqref{eq_predictive_Gaussian_2BIS}, \eqref{eq_predictive_GaussianBIS} and \eqref{eq_cjsstar}.
}

\PP{Next consider the case $h \geq 3$ and
\begin{equation*}
f_{z_{h}|\vvarsigma}(w_{h}|\vs)=\frac{\gamma _{h}^{\frac{1}{2}}w_{h}^{-\frac{%
1}{2}}}{\left( 2\pi \right) ^{\frac{h}{2}}}\int_{\mathbb{R}_{+}^{h-1}}\exp
\left( -\frac{w_{h}}{2\sigma _{h}^{2}}\right) \exp \left( -\frac{1}{2}\left(
\sum_{t=1}^{h-1}\frac{\beta }{\alpha _{t}}v_{t}\right) \right) (\sigma
_{h}^{2})^{-\frac{1}{2}}\prod_{t=1}^{h-1}\frac{\mathrm{d}v_{t}}{\sqrt{v_{t}}}.
\end{equation*}%
Note that $\sigma _{h}^{2}=\omega (1+s)$ with $s:=(1+v_{h-1})\beta \sigma
_{h-1}^{2}/\omega $; next set $\rho :=w_{h}/(2\omega )$ so that $\exp \left(
-w_{h}/(2\sigma _{h}^{2})\right) =\exp \left( -\rho /(1+s)\right) =\exp
(-\rho )\exp \left( \rho s/(1+s)\right) $. }

Setting $p_{j}(\rho ):=(-1)^{j}L_{j}^{(-1)}(\rho )$,
where $%
L_{j}^{(-1)}(\rho )=\sum_{k=0}^{j}\left( k\right) _{j-k}\left( -j\right) _{k}%
\frac{\rho ^{k}}{k!}$ is a generalized Laguerre polynomial, see \cite{Abramowitz1964} formulae 22.2.12,
22.9.16, and
one finds
\begin{equation*}
\exp \left( \frac{s\rho }{1+s}\right) =\sum_{j=0}^{\infty }\left( -s\right)
^{j}L_{j}^{(-1)}(\rho )=\sum_{j=0}^{\infty }p_{j}(\rho
)s^{j}=\sum_{j=0}^{\infty }p_{j}(\rho )(1+v_{h-1})^{j}\beta ^{j}\left(
\sigma _{h-1}^{2}\right) ^{j}\omega ^{-j}.
\end{equation*}%

\PP{Next consider $(\sigma _{h}^{2})^{-\frac{1}{2}}=\omega ^{-\frac{1}{2}%
}(1+s)^{-\frac{1}{2}}$ and expand $(1+s)^{-\frac{1}{2}}$ in decreasing
powers of $s$, which is convergent thanks to Assumption \ref{ass_1}; this implies that
\begin{equation*}
(\sigma _{h}^{2})^{-\frac{1}{2}}=
\omega ^{-\frac{1}{2}}\sum_{k_{1}=0}^{\infty }\binom{-\frac{1}{2}}{k_{1}}%
s^{-\frac{1}{2}-k_{1}}=\sum_{k_{1}=0}^{\infty }\binom{-\frac{1}{2}}{k_{1}}%
\left( 1+v_{h-1}\right) ^{-\frac{1}{2}-k_{1}}\beta ^{-\frac{1}{2}%
-k_{1}}\left( \sigma _{h-1}^{2}\right) ^{-\frac{1}{2}-k_{1}}\omega ^{k_{1}}
\end{equation*}%
Substituting back in $f_{z_{h}|\vvarsigma}(w_{h}|\vs)$ one finds
\begin{align*}
f_{z_{h}|\vvarsigma}(w_{h}|\vs) &=\frac{\gamma _{h}^{\frac{1}{2}}w_{h}^{-%
\frac{1}{2}}}{\left( 2\pi \right) ^{\frac{h}{2}}}\e^{-\rho
}\sum_{j=0}^{\infty }p_{j}(\rho)\sum_{k_{1}=0}^{\infty }\binom{-%
\frac{1}{2}}{k_{1}}\beta ^{j-\frac{1}{2}-k_{1}}\omega ^{k_{1}-j}\cdot  \\
&\cdot \int_{\mathbb{R}_{+}^{h-1}}(1+v_{h-1})^{j-\frac{1}{2}-k_{1}}\left(
\sigma _{h-1}^{2}\right) ^{j-\frac{1}{2}-k_{1}}\exp \left( -\frac{1}{2}%
\left( \sum_{t=1}^{h-1}\frac{\beta }{\alpha _{t}}v_{t}\right) \right)
\prod_{t=1}^{h-1}\frac{\mathrm{d}v_{t}}{\sqrt{v_{t}}}
\end{align*}
Next use the binomial expansion on $\left( \sigma _{h-1}^{2}\right) ^{q}$
with $q=j-\frac{1}{2}-k_{1}$, setting $K_{j}^{\ast }:=\sum_{i=2}^{j}k_{i}$, $%
U_{j}^{\ast }:=\sum_{i=2}^{j}K_{i}^{\ast }$
\begin{align}
f_{z_{h}|\vvarsigma}(w_{h}|\vs) &=\frac{\gamma _{h}^{\frac{1}{2}}w_{h}^{-%
\frac{1}{2}}}{\left( 2\pi \right) ^{\frac{h}{2}}}\e^{-\rho
}\sum_{j=0}^{\infty }p_{j}(\rho )\sum_{k_{1}=0}^{\infty }\binom{-%
\frac{1}{2}}{k_{1}}\beta ^{q}\omega ^{k_{1}-j}\cdot
\nonumber\\
&\cdot \sum_{k_{2}=0}^{q}\sum_{k_{3}=0}^{q-k_{2}}\cdots
\sum_{k_{h-2}=0}^{q-K_{h-3}^{\ast }}\binom{q}{k_{2}}\cdots \binom{%
q-K_{n-3}^{\ast }}{k_{h-2}}\beta ^{(h-3)q-U_{h-2}^{\ast }}\omega
^{K_{h-2}^{\ast }}\cdot
\nonumber\\
&\cdot \int_{\mathbb{R}_{+}^{h-1}}(1+v_{h-1})^{q}\prod_{t=2}^{h-2}\left(
1+v_{h-t}\right) ^{q-K_{t}^{\ast }}\left( 1+(1+v_{1})\beta \frac{\sigma
_{1}^{2}}{\omega }\right) ^{q-K_{n-2}^{\ast }}\cdot
\nonumber\\
&\cdot \exp \left( -\frac{1}{2}\left( \sum_{t=1}^{h-1}\frac{\beta }{\alpha
_{t}}v_{t}\right) \right) \prod_{t=1}^{h-1}\frac{\mathrm{d}v_{t}}{\sqrt{v_{t}%
}}. \label{eq_ff}
\end{align}
Note that $q-K_{t}^{\ast }=j-\frac{1}{2}-K_{t}$ in the previous expression.
The integral in the last two lines of \eqref{eq_ff} can be computed using
Lemma \ref{Lemma_psi_int}, and equals
\begin{equation*}
\pi ^{\frac{h-1}{2}}\left( \frac{\omega +\beta \sigma _{1}^{2}}{\omega }%
\right) ^{j-K_{h-2}}\left( \frac{\beta \sigma _{1}^{2}}{\omega }\right) ^{-%
\frac{1}{2}}\prod_{t=1}^{h-2}\Psi \left( \frac{1}{2},j+1-K_{t};-\frac{\beta
}{2\alpha _{h-t}}\right) \Psi \left( \frac{1}{2},j+1-K_{h-2};\frac{\omega
+\beta \sigma _{1}^{2}}{2\alpha _{1}\sigma _{1}^{2}}\right)
\end{equation*}%
Substituting back and rearranging, one finds \eqref{eq_c_r_h_varsigmaBIS3}.
}
\PP{
Summability of \eqref{eq_predictive_Gaussian_2BIS}, \eqref{eq_predictive_GaussianBIS},
\eqref{eq_cjsstar},
\eqref{eq_c_r_h_varsigmaBIS3}
for any finite $w_h$ or $u_h$ is proved as in the proof of Theorem \ref{theorem_1} using Lemma \ref{lemma-Psi}.
}
\end{proof}

\section{Mapping estimation uncertainty}
\label{sec_grid}
\PP{This Appendix describes how to construct a grid of points in $B_{\eta }$.
The approach is to select points $\vy$ uniformly in $\mathcal{S}$, the
closed unit ball in $r$ dimensions, map them into points $\vtheta$ in $%
A_{\eta }$, and finally apply the exact formulae $\vvarphi$ to obtain points
in $B_{\eta }=\vvarphi(A_{\eta })=\{\vvarphi(\vtheta),\vtheta\in A_{\eta }\}$
in \eqref{eq_CI2}. This creates a grid of points in $B_{\eta }$, over which
one can obtain extremes of the uncertainty region.}

\PP{
Let $\vv=\mB\mR^{\prime }\vtheta$, $\vu=\mB\mR^{\prime }\widehat{\vtheta}$, $%
\mB=(c_{\eta }\mOmega_{\mR})^{-\frac{1}{2}}$ where $\mA^{\frac{1}{2}}$
indicates the symmetric square root of a positive semidefinite matrix $\mA$,
i.e. $\mA^{\frac{1}{2}}=\mU\mLambda^{\frac{1}{2}}\mU^{\prime }$, where $\mA=%
\mU\mLambda\mU^{\prime }$ is the spectral decomposition of $\mA$. The
vectors $\vv$ and $\vu$ are $r\times 1$ vectors, and let $\vx=\vv-\vu$. The
set $A_{\eta }$ in \eqref{eq_CI} corresponds to $C_{\eta }=\{\vv\in \SR%
^{r}:\Vert \vv-\vu\Vert \leq 1\}$, where $\Vert \va\Vert =(\va^{\prime }\va%
)^{\frac{1}{2}}$ is the Euclidean norm. Any point $\vv$ in $C_{\eta }$
corresponds to a unique point $\vx$ in the closed unit ball $\mathcal{S}=\{%
\vx\in \SR^{r}:\Vert \vx\Vert \leq 1\}$ and vice versa. Inverting this map,
any point $\vx$ in $\mathcal{S}$ corresponds to one $\mR^{\prime }\vtheta=\mB%
^{-1}(\vx+\vu)$ in $A_{\eta }$.
}

\PP{
In order to sample points uniformly in $\mathcal{S}$, a simple algorithm is
to draw $\vy$ from a $\rN(0,\mI_{r})$ and $u$ from a $\mathcal{U}_{[0,1]}$, the
uniform distribution on $[0,1]$, with $u$ independent of $\vy$; then $\vx=u^{%
\frac{1}{r}}\vy/\Vert \vy\Vert $ is uniformly distributed in $\mathcal{S}$,
see e.g. \citet{Harman2010}. Finally one can set
\begin{equation}
\mR^{\prime }\vtheta=\mB^{-1}(\vx+\vu)=(c_{\eta }\mOmega_{\mR})^{\frac{1}{2}%
}u^{\frac{1}{r}}\frac{\vy}{\Vert \vy\Vert }+\mR^{\prime }\widehat{\vtheta}
\label{eq_gridA}
\end{equation}%
to find the corresponding point in $A_{\eta }$. Finally apply the exact
formulae $\vvarphi$ to obtain points in $B_{\eta }=\vvarphi(A_{\eta })=\{%
\vvarphi(\vtheta),\vtheta\in A_{\eta }\}$ in \eqref{eq_CI2}.
}

\PP{In the implementation of the calculations behind Table \ref{table_grid}, 100 draws of $\vy$ and $u$ were generated independently. 100 values of $\vx$ were generated as $\vx=u^{\frac{1}{r}}\vy/\Vert \vy\Vert $, obtaining points uniformly distributed within the sphere $\mathcal{S}$. The same 100 draws of $\vy$ were used to generate the corresponding points on the surface of $\mathcal{S}$ by replacing the value of $u$ with 1 in formula \eqref{eq_gridA}. This gave a set of 200 points in $\mathcal{S}$, half of which on the surface.
}

\PP{Because of the asymptotic nature of the confidence ellipsoid, some of the obtained points $\mR^{\prime }\vtheta=\mB%
^{-1}(\vx+\vu)$ contained negative values of $\omega$ or $\alpha$, i.e. were not inside the parameter space. This never happened for the case of Microsoft stock returns, but happened some 20\% of the time in the case of the simulation run; these points $\mR^{\prime }\vtheta$ were discarded.}

\PP{The value of $\sigma_1^2$ was chosen as 3 times $\omega(1+\alpha+\beta)$, because the choice $\omega/(1-\alpha-\beta)$ sometimes gave negative values in the `One simulation run' case, while for the `Microsoft stock return' case  $\sigma_1^2$ was chosen as $\omega/(1-\alpha-\beta)$, because this gave always positive values.}

\end{document}